 \newtheorem*{corollary*}{Corollary}
 \newtheorem*{construction*}{Construction}
 \newtheorem*{definition*}{Definition}
 \newtheorem*{notation*}{Notation}
 \newtheorem*{lemma*}{Lemma}
 \newtheorem*{theorem*}{Theorem}
 \newtheorem*{remark*}{Remark}
 \newtheorem*{example*}{Example}
 \newtheorem*{conjecture*}{Conjecture}
 \newtheorem*{condition*}{Condition}
 \newtheorem*{result*}{Result}
 \newtheorem*{property*}{Property}
 \newtheorem*{cor*}{Corollary}
 \newtheorem*{const*}{Construction}
 \newtheorem*{defn*}{Definition}
 \newtheorem*{notn*}{Notation}
 \newtheorem*{lem*}{Lemma}
 \newtheorem*{thm*}{Theorem}
 \newtheorem*{rem*}{Remark}
 \newtheorem*{exm*}{Example}
 \newtheorem*{conj*}{Conjecture}
 \newtheorem{lemma}{Lemma}[section]
 \newtheorem{proposition}[lemma]{Proposition}
 \newtheorem{remark}[lemma]{Remark}
 \newtheorem{theorem}[lemma]{Theorem}
 \newtheorem{thm}[lemma]{Theorem}
 \newtheorem{prop}[lemma]{Proposition}
 \newtheorem{defn}[lemma]{Definition}
 \newtheorem{notn}[lemma]{Notation}
 \newtheorem{cor}[lemma]{Corollary}
 \newtheorem{introtheorem}{Theorem}
 \crefname{introtheorem}{theorem}{theorems}
 \Crefname{introtheorem}{Theorem}{Theorems}
  \newtheorem{introthm}[introtheorem]{Theorem}
   \crefname{introthm}{theorem}{theorems}
 \Crefname{introthm}{Theorem}{Theorems}
  \crefname{introcorollary}{corollary}{corollaries}
 \Crefname{introcorollary}{Corollary}{Corollaries}
 \newtheorem{introcor}[introtheorem]{Corollary}
   \crefname{introcor}{corollary}{corollaries}
 \Crefname{introcor}{Corollary}{Corollaries}
   \crefname{introconjecture}{conjectures}{conjectures}
 \Crefname{introconjecture}{Conjecture}{Conjectures}
    \crefname{introconj}{conjectures}{conjectures}
 \Crefname{introconj}{Conjecture}{Conjectures}
     \crefname{introlem}{lemma}{lemmas}
 \Crefname{introlem}{Lemma}{Lemmas}
 \crefname{introremark}{remark}{remarks}
 \Crefname{introremark}{Remark}{Remarks}
  \crefname{introrem}{remark}{remarks}
 \Crefname{introrem}{Remark}{Remarks}
 \newtheorem{introprop}[introtheorem]{Proposition}
   \crefname{introprop}{Proposition}{Propositions}
 \Crefname{introprop}{Proposition}{Propositions}
   \crefname{introdefn}{definition}{definitions}
 \Crefname{introdefn}{Definition}{Definitions}
   \crefname{intronotn}{notation}{notations}
 \Crefname{intronotn}{Notation}{Notations}
   \crefname{introtask}{task}{tasks}
 \Crefname{introtask}{Task}{Tasks}
  \crefname{introprob}{problem}{problems}
 \Crefname{introprob}{Problem}{Problems}
   \crefname{introquestion}{question}{questions}
 \Crefname{introquestion}{Question}{Questions}
 \crefname{theorem}{theorem}{theorems}
 \Crefname{theorem}{Theorem}{Theorems}
  \crefname{thm}{theorem}{theorems}
 \Crefname{thm}{Theorem}{Theorems}
  \crefname{corollary}{Corollary}{Corollaries}
 \Crefname{corollary}{Corollary}{Corollaries}
   \crefname{cor}{Corollary}{Corollaries}
 \Crefname{cor}{Corollary}{Corollaries}
   \crefname{conjecture}{conjectures}{conjectures}
 \Crefname{conjecture}{Conjecture}{Conjectures}
    \crefname{conj}{conjectures}{conjectures}
 \Crefname{conj}{Conjecture}{Conjectures}
     \crefname{lem}{lemma}{lemmas}
 \Crefname{lem}{Lemma}{Lemmas}
      \crefname{lemma}{Lemma}{Lemmas}
 \Crefname{lemma}{Lemma}{Lemmas}
 \crefname{remark}{remark}{remarks}
 \Crefname{remark}{Remark}{Remarks}
  \crefname{rem}{remark}{remarks}
 \Crefname{rem}{Remark}{Remarks}
   \crefname{rem}{remark}{remarks}
 \Crefname{rem}{Remark}{Remarks}
   \crefname{proposition}{Proposition}{Proposition}
 \Crefname{proposition}{Proposition}{Proposition}
    \crefname{prop}{Proposition}{Propositions}
 \Crefname{prop}{Proposition}{Propositions}
   \crefname{defn}{definition}{definitions}
 \Crefname{defn}{Definition}{Definitions}
   \crefname{notn}{notation}{notations}
 \Crefname{notn}{Notation}{Notations}
   \crefname{task}{task}{tasks}
 \Crefname{task}{Task}{Tasks}
  \crefname{prob}{problem}{problems}
 \Crefname{prob}{Problem}{Problems}
   \crefname{question}{question}{questions}
 \Crefname{question}{Question}{Questions}
\newcommand{\alp}{\alpha}
\renewcommand{\Im}{\operatorname{Im}}
\newcommand{\Exp}{\operatorname{Exp}}
\newcommand{\GL}{\operatorname{GL}}
\newcommand{\sll}{{\mathfrak{sl}}}
\newcommand{\oH}{\operatorname{H}}
\newcommand{\Ad}{\operatorname{Ad}}
\newcommand{\WF}{\operatorname{WF}}
\newcommand{\Irr}{\operatorname{Irr}}
\newcommand{\WO}{\operatorname{WO}}
\newcommand{\C}{\mathbb{C}}
\newcommand{\bfG}{\mathbf{G}}
\newcommand{\bfH}{\mathbf{H}}
\newcommand{\R}{\mathbb{R}}
\newcommand{\Sc}{\cS}
\newcommand{\Rep}{\operatorname{Rep}}
\newcommand{\Fre}{Fr\'echet }
\newcommand{\onto}{\twoheadrightarrow}
\providecommand{\fg}{\mathfrak{g}}
\providecommand{\fh}{\mathfrak{h}}
\providecommand{\fl}{\mathfrak{l}}
\providecommand{\fn}{\mathfrak{n}}
\providecommand{\fp}{\mathfrak{p}}
\providecommand{\fq}{\mathfrak{q}}
\providecommand{\fs}{\mathfrak{s}}
\providecommand{\fu}{\mathfrak{u}}
\providecommand{\fv}{\mathfrak{v}}
\providecommand{\cM}{\mathcal{M}}
\providecommand{\cN}{\mathcal{N}}
\providecommand{\cO}{\mathcal{O}}
\providecommand{\cS}{\mathcal{S}}
\newcommand{\simpAr}[2][r]{%
\ar@{}[#1]|-*[@]_{#2}%
}
\newcommand{\DimaB}[1]{{{#1}}}
\newcommand{\DimaC}[1]{{{#1}}}
\newcommand{\DimaD}[1]{{{#1}}}
\newcommand{\DimaE}[1]{{{#1}}}
\newcommand{\nextversion}[1]{} 
\newcommand{\proofend}{\hfill$\Box$\smallskip}
\begin{document}

\title{Generalized Whittaker quotients of Schwartz functions on $G$-spaces}
\author{Dmitry Gourevitch}
\address{Dmitry Gourevitch, Faculty of Mathematics and Computer Science, Weizmann
Institute of Science, POB 26, Rehovot 76100, Israel }
\email{dimagur@weizmann.ac.il}
\urladdr{http://www.wisdom.weizmann.ac.il/~dimagur}
\author{Eitan Sayag}
\address{Eitan Sayag,
 Department of Mathematics,
Ben Gurion University of the Negev,
P.O.B. 653,
Be'er Sheva 84105,
ISRAEL}
 \email{eitan.sayag@gmail.com}

\keywords{Whittaker support, wave-front set, nilpotent orbit, moment map, reductive group actions, distinguished representations.}
\subjclass[2010]{20G05, 20G25, 22E35, 22E45,14L30, 46F99}
%
%
%
%
%
%
%
%
\date{\today}

\begin{abstract}
Let $G$ be a reductive group over a local field $F$ of characteristic zero. Let $X$ be a $G$-space.
In this paper we study the existence of generalized Whittaker quotients for the space of Schwartz functions on $X$, considered as a representation of $G$. We show that the set of nilpotent elements of the dual space to the Lie algebra such that the corresponding generalized Whittaker quotient does not vanish contains the nilpotent part of the image of the moment map, and lies in the closure of this image. This generalizes recent results of Prasad and Sakellaridis.

Applying our theorems to symmetric pairs $(G,H)$ we show that there exists an infinite-dimensional $H$-distinguished  representation of $G$ if and only if the real reductive group corresponding to the pair $(G,H)$ is non-compact.
 For quasi-split $G$ we also extend to the Archimedean case the theorem of Prasad stating that  there exists a generic $H$-distinguished  representation of $G$ if and only if the real reductive group corresponding to the pair $(G,H)$ is quasi-split.

In the non-Archimedean case our result also gives rather sharp bounds on the wave-front sets of distinguished representations.

\DimaE{
The results in the present paper can be used to recover many of the vanishing results on periods of automorphic forms proved by Ash-Ginzburg-Rallis \cite{AGR}. This follows from our Corollary \ref{cor:global} when combined with the restrictions on the Whittaker support of cuspidal automorphic representations proven in  \cite{GGS2}.
}
\end{abstract}

\maketitle
%
%


\section{Introduction}\label{sec:intro}

Let $F$ be a local field of characteristic zero.
Let $\bf G$ be a \DimaD{connected algebraic} reductive group defined over $F$, let $G:={\bf G}(F)$ be its $F$-points and $\fg$ be the Lie algebra of $G$.

If $F$ is non-Archimedean, we denote by $\cM(G)$ the category of admissible smooth finitely-generated representations (\cite{BZ}). If $F$ is Archimedean, we denote by $\cM(G)$ the category of Casselman-Wallach representations, {\it i.e.}  admissible smooth finitely-generated \Fre representations of moderate growth (see \cite[\S 11]{Wal},  or \cite{CasGlob}). We denote by $\Irr(G)$ the collection of irreducible representations in $\cM(G)$.

A classical theme in representation theory of reductive groups over local fields is the study of representations {\it distinguished} with respect to a subgroup $H \subset G.$ A representation $\pi\in \Irr(G)$ is called $H$-distinguished if it has a non-zero $H$-equivariant  linear functional  (continuous if $F$ is Archimedean). Denote by $\Irr(G)_{H}\subset \Irr(G)$ the subcollection of $H$-distinguished representations.
Such are the local components of {\it distinguished} automorphic representations and such are the discrete series representations that  contribute to the Plancherel decomposition of the Hilbert space $L^{2}(X)$, the space of square integrable functions (or sections of density bundle) on the $G$-space $X=G/H.$

In this paper we offer a unified approach to two seemingly unrelated questions concerning distinguished representations. The first is to clarify and generalize the relationship between distinction and genericity studied in \cite{PraSak} to arbitrary $G$-spaces over local fields, Archimedean or not. The second is a search for a non-Archimedean analogue of the qualitative study of the Plancherel decomposition provided by \cite{HarWeich}.

Regarding the first question, it was recently studied in the special case of linear periods in \cite{SayVer} both for Archimedean and non-Archimedean fields, and for symmetric pairs over non-Archimedean fields in \cite{PraSak}. It should be mentioned that the motivating example here is the disjointness result of \cite{HeuRal} showing that irreducible generic representations of $GL(2n)$ never have non-zero symplectic invariant functionals (see \cite{OS} for a complete account).

As for the second, while a description of the wavefront set of the unitary representation $L^{2}(X)$ is of independent interest, it also yields a-priori bound on the wavefront of individual irreducible unitary distinguished representation that occurs in the Plancherel decomposition.

The unification of the topics mentioned above is obtained by studying the {\it Whittaker support} (\cite{GGS2}) of the non-admissible representation of $G$ on spaces of Schwartz functions on $X.$ Understanding Whittaker support allows, in the non-Archimedean case, to deduce exact information on wave front of individual distinguished representations leading to our answer to the first question. The study of $ \WO(\Sc(X))$, which is our smooth replacement to the problem studied in \cite{HarWeich}, is carried out using the theory of invariant distributions and in particular the {\it orbitwise} technique introduced by Gelfand-Kazhdan \cite{GK} in the non-Archimedean case (and its various extensions and ramifications), that in some cases reduces the study of invariant distributions on a space to the study of invariant distributions on each of the orbits separately.

 In this language, the orbits that can support equivariant distributions are those for which a certain character is trivial on a stabilizer of one (hence any) point in the orbit. This can be reformulated as a condition tangling the point and the character as living in the image of a partial moment map attached to the $G$-space $X$ and the orbit.
Thus, as will also be clear from the formulation of our results, a key tool in our approach is the moment map attached to the $G$-space $X$.
An unexpected aspect of this geometric approach is that it allows us to study the Whittaker supports of modules of the form $\Sc(X)$ even in cases where $X$ is not $G$-homogenous.



We now turn to the results of the present paper.  We need some notations.
Let $\bf X$ be a smooth  $\bf G$-variety defined over $F$, and let $X:={\bf X}(F)$.
Let $\mu:T^*X\to \fg^*$ denote the moment map, and let $\cM$ denote the closure of its image. Note that $\cM$ is a closed conical set. Let $\cN\subset \fg^*$ denote the nilpotent cone.
Let $\Sc(X)$ denote the space of Schwartz functions on $X$ and let $\WO(\Sc(X))\subset \cN$ denote the set of nilpotent elements $\varphi$ such that $\Sc(X)$ has a generalized Whittaker quotient corresponding to $\varphi$ (see \S \ref{subsec:Whit} below).
In \S \ref{sec:PfWO} we prove the following theorem.

\begin{introthm}[\S \ref{sec:PfWO}]\label{thm:WO}\begin{enumerate}[(i)]
\item We have
$ \WO(\Sc(X))\subset \cM\cap \cN$.
\item If either $F$ is non-Archimedean or $\bf X$ is quasi-projective then $\Im \mu \cap \cN \subset \WO(\Sc(X))$.
\end{enumerate}
\end{introthm}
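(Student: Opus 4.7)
The inclusion $\WO(\Sc(X))\subset \cN$ in (i) is built into the definition of $\WO$, so the substance of (i) is the inclusion $\WO(\Sc(X))\subset \cM$. My plan is to translate both containments into statements about $(N_\varphi,\psi_\varphi)$-equivariant distributions on $X$, where $(N_\varphi,\psi_\varphi)$ is the unipotent subgroup and character attached to $\varphi$ by a Jacobson--Morozov $\mathfrak{sl}_2$-triple. Under this dictionary, a nonzero generalized Whittaker quotient of $\Sc(X)$ at $\varphi$ is the same as a nonzero (continuous, in the Archimedean case) distribution $\xi$ on $X$ that transforms by $\psi_\varphi$ under $N_\varphi$. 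The key geometric identity powering the whole argument is $\mu(T^*_x X)=\fg_x^\perp$, so that $\Im\mu=\bigcup_{x\in X}\fg_x^\perp$, and $\varphi\in\cM$ if and only if $\varphi$ is a limit of functionals that annihilate $\fg_x$ for various $x\in X$.

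For (i), I would pick $\varphi\in \WO(\Sc(X))$, fix the associated distribution $\xi\neq 0$, and localize orbit-by-orbit on $X$. A devissage of $X$ by $G$-invariant locally closed subsets --- Bernstein's localization in the non-Archimedean case and the Luna/Mostow slice theorem in the Archimedean case --- reduces matters to a single $G$-orbit $Gx$ carrying a nonzero $(N_\varphi,\psi_\varphi)$-equivariant distribution, perhaps twisted by transversal data. On $G/\Stab_G(x)$ the existence of such a distribution forces $d\psi_\varphi$ to vanish on $\fn_\varphi\cap \fg_y$ for some $y\in Gx$, and combining this with $G$-equivariance of $\mu$ and the freedom to conjugate the $\mathfrak{sl}_2$-triple within $G$ upgrades the condition to $\varphi|_{\fg_y}=0$, giving $\varphi\in \fg_y^\perp\subset \Im\mu$. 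Taking limits over shrinking localizations yields $\varphi\in \overline{\Im\mu}=\cM$. The main technical hurdle is making the devissage uniform in $(N_\varphi,\psi_\varphi)$-equivariance rather than merely in $G$-equivariance; in the Archimedean setting this will require care with the transversal distributions produced by the slice theorem.

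For (ii), assume $\varphi\in \Im\mu\cap\cN$, so there is $x\in X$ with $\varphi|_{\fg_x}=0$, and the task is to construct a nonzero $(N_\varphi,\psi_\varphi)$-equivariant functional on $\Sc(X)$ by integrating against a polarization of $\varphi|_{\fn_\varphi}$. Concretely, fix a Jacobson--Morozov triple for $\varphi$ and a maximal isotropic subalgebra $\fl\subset \fn_\varphi$ for the symplectic form $\varphi([\cdot,\cdot])$, with corresponding closed subgroup $L\subset N_\varphi$. The candidate functional sends $f\in\Sc(X)$ to an integral of $f$ against $\overline{\psi_\varphi}$ along the $L$-orbit of $x$, and the compatibility $\varphi|_{\fg_x}=0$ is exactly the infinitesimal condition needed to make the definition unambiguous. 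In the non-Archimedean case, a test function supported in a small compact open neighborhood of $x$ demonstrates non-vanishing immediately. In the Archimedean case, quasi-projectivity of $\bf X$ furnishes a Luna-type analytic slice $S$ at $x$, locally modeling $\Sc(X)$ as $\Sc(Lx)\,\ctp\,\Sc(S)$ on which the unipotent integral separates, converges, and is manifestly nonzero on suitable bump functions. I expect the chief obstacle to be precisely this Archimedean convergence and non-vanishing step: quasi-projectivity is what simultaneously guarantees the slice and the tempered growth bounds that legitimize the unipotent integration inside the Schwartz category.
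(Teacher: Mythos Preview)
Your argument for part (i) has a genuine gap at the ``upgrade'' step. From the existence of a nonzero $(N_\varphi,\eta_\varphi)$-equivariant distribution you can extract only that $\varphi$ vanishes on $\fn_\varphi\cap\fg_y$ (or, working as the paper does with the smaller unipotent group $V=\Exp(\fg^h_{\geq 2})$, on $\fv\cap\fg_y$). This is \emph{not} the same as $\varphi|_{\fg_y}=0$: the stabilizer $\fg_y$ has no reason to be graded for $h$, so an element $Z\in\fg_y$ may have nonzero $\fg^h_2$-component without itself lying in $\fv$, and $\varphi$ (which sits in $(\fg^*)^h_{-2}$) can then be nonzero on $Z$. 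Conjugating the $\sll_2$-triple does not help, since it moves $\varphi$, $\fn_\varphi$, and $\fg_y$ together and leaves you with the same relative configuration. Your phrase ``taking limits over shrinking localizations'' does not supply the missing mechanism either: the limit that is actually needed lives in $\fg^*$, not in $X$.

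The paper does not attempt to place $\varphi$ in $\Im\mu$. Instead, from $\varphi|_{\fv\cap\fg_x}=0$ it deduces that $\varphi|_\fv$ lies in the image of $T_x^*X\overset{\mu}{\to}\fg^*\onto\fv^*$, so there exists $\psi\in\mu(T_x^*X)\subset\cM$ with $\psi|_\fv=\varphi|_\fv$. Since the kernel of restriction to $\fv$ is $(\fg^*)^h_{\geq -1}$, one has $\psi=\varphi+\psi'$ with $\psi'\in(\fg^*)^h_{\geq -1}$. The key step you are missing is a torus contraction: for $t$ in the one-parameter subgroup with Lie algebra $Fh$ and $|t|<1$, the sequence $\psi_n:=t^{2n}\Ad^*(t^n)\psi$ fixes $\varphi\in(\fg^*)^h_{-2}$ and sends $\psi'$ to zero, so $\psi_n\to\varphi$; since $\cM$ is $G$-invariant, conic, and closed, this gives $\varphi\in\cM$. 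Note also that the paper stratifies by orbits of the \emph{unipotent} group $V$, not of $G$: unipotence of the stabilizers is precisely the hypothesis under which the Bernstein--Gelfand--Kazhdan localization principle (Theorem~\ref{thm:BZ}(i)) applies directly, avoiding the transversal complications you anticipate.

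For part (ii) your plan is correct in outline and is essentially what underlies the results the paper cites as Theorem~\ref{thm:BZ}(ii): once $\varphi|_{\fg_x}=0$, the character $\eta_\varphi$ is trivial on $N_x$, and one builds the equivariant distribution from an invariant measure on the $N$-orbit, extended to $X$ by open--closed induction in the non-Archimedean case and by the analytic continuation machinery of \cite{GSS} (which needs quasi-projectivity) in the Archimedean case.
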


Let us now consider the homogeneous case, {\it i.e.} $X=G/H$. In this case we have $\Im \mu=G\cdot\fh^{\bot}$. Here, $\fh^{\bot}$ denotes the orthogonal complement to $\fh$ in $\fg^*$, and $G\cdot\fh^{\bot}$ denotes the image of $\fh^{\bot}$ under the coadjoint action of $G$ on $\fg^*$. For homogeneous $X$ we can formulate a twisted version of Theorem \ref{thm:WO}.

Let $\zeta:H\to F$ and $\psi:H\to F^{\times}$ be algebraic characters.
Let $d\zeta\in \fh^*$ denote the differential of $\zeta$, and let $Fp_{\fh}^{-1}(d\zeta)
\subset \fg^*$ denote the linear space spanned by the preimage of $d\zeta$ under the restriction map
$p_{\fh}:\fg^*\onto \fh^*$.
Fix a character $\xi_m$ of $F^{\times}$ and a non-trivial unitary character $\xi_a$ of $F$, and let $\chi$ be the character of $H$ given by the product
\begin{equation}\label{=chi}
\chi=(\xi_a\circ \zeta)\cdot(\xi_m\circ\psi).
\end{equation}
Let $\Sc(G)_{H,\chi}$ denote the space of $\chi$-coinvariants in $\Sc(G)$ under the action of $H$ by right multiplications (see \eqref{=coinv} below).
\begin{introthm}[\S \ref{sec:PfWO}]\label{thm:Twisted}
Let $H$,  $\chi$, and $Fp_{\fh}^{-1}(d\zeta)$ be as above. Then
$$G\cdot p_{\fh}^{-1}(d\zeta)\cap \cN \subset \WO(\Sc(G)_{H,\chi})\subset \overline{G\cdot Fp_{\fh}^{-1}(d\zeta)}\cap \cN.$$
\end{introthm}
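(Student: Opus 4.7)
The plan is to deduce the theorem from \cref{thm:WO} applied to an auxiliary $\mathbf{G}$-space that absorbs the character $\chi$ on $\mathbf{H}$. Write $\chi = \chi_a\chi_m$ with $\chi_a := \xi_a\circ\zeta$ and $\chi_m := \xi_m\circ\psi$, and consider the $\mathbf{G}$-space $\mathbf{Y} := \mathbf{G}\times_{\mathbf{H}}(\mathbb{A}^1\times\mathbb{G}_m)$, where $\mathbf{H}$ acts on $\mathbb{A}^1$ by translation via $\zeta$ and on $\mathbb{G}_m$ by multiplication via $\psi$. A direct calculation identifies the cotangent fibre at $[g,t,s]\in Y$ with triples $(\xi',\lambda_1,\lambda_2)\in\fg^*\oplus F\oplus F$ satisfying $p_{\fh}(\xi') = -\lambda_1\,d\zeta - \lambda_2\,d\psi$, and the moment map sends such a triple to $g\cdot\xi'$, so $\Im\mu_Y = G\cdot(F\tilde\zeta+F\tilde\psi+\fh^\perp)$ for any lifts $\tilde\zeta,\tilde\psi\in\fg^*$ of $d\zeta,d\psi\in\fh^*$.

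For the upper bound, composing partial Fourier transform in the $\mathbb{A}^1$-direction against $\xi_a$ with partial Mellin transform in the $\mathbb{G}_m$-direction against $\xi_m$ yields a $G$-equivariant surjection $\Sc(Y)\twoheadrightarrow\Sc(G)_{H,\chi}$. Right-exactness of the generalized Whittaker quotient functors gives $\WO(\Sc(G)_{H,\chi})\subset\WO(\Sc(Y))$, and \cref{thm:WO}(i) yields $\WO(\Sc(Y))\subset\overline{G\cdot(F\tilde\zeta+F\tilde\psi+\fh^\perp)}\cap\cN$. The extra $F\tilde\psi$ term drops out upon intersecting with $\cN$: since $\tilde\psi$ is the lift of the differential of an algebraic multiplicative character of $\mathbf{H}$, for $\mu\neq 0$ the coset $\mu\tilde\psi+\fh^\perp$ avoids $\cN$ (every element there inherits $\mu$ as an eigenvalue under any faithful representation). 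Hence $\overline{G\cdot(F\tilde\zeta+F\tilde\psi+\fh^\perp)}\cap\cN = \overline{G\cdot Fp_{\fh}^{-1}(d\zeta)}\cap\cN$, giving the claimed upper bound.

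For the lower bound, by $G$-equivariance we may assume $\varphi\in p_{\fh}^{-1}(d\zeta)\cap\cN$, so $\varphi|_\fh = d\zeta$. Since $\mathbf{Y}$ is quasi-projective as an $(\mathbb{A}^1\times\mathbb{G}_m)$-bundle over $\mathbf{G}/\mathbf{H}$, \cref{thm:WO}(ii) applied to $Y$ gives $G\cdot Fp_{\fh}^{-1}(d\zeta)\cap\cN\subset\WO(\Sc(Y))$. The remaining task is to show that non-vanishing at $\varphi$ survives passage to the $(\xi_a,\xi_m)$-isotypic quotient via the surjection above. For this I would construct a non-zero $(N_\varphi,\psi_\varphi)\times(H,\chi)$-equivariant distribution on $G$ supported near the double coset $N_\varphi\cdot e\cdot H$, using a Gelfand--Kazhdan style orbitwise argument. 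The character-compatibility on the stabilizer $N_\varphi\cap H$ reduces to $\varphi = d\chi$ on $\fn_\varphi\cap\fh$, which holds because $d\chi_m$ vanishes on nilpotent elements (characters of reductive $\fh$ vanish on $[\fh,\fh]$, which contains all nilpotents), so $d\chi|_{\fn_\varphi\cap\fh} = d\zeta|_{\fn_\varphi\cap\fh} = \varphi|_{\fn_\varphi\cap\fh}$.

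The hardest step is the lower bound: orbit-method compatibility of characters is only a necessary condition for the existence of a non-zero equivariant distribution, and one must actually produce such a distribution to ensure non-vanishing of the Whittaker quotient. A natural approach is to leverage the non-vanishing Whittaker quotient of $\Sc(Y)$ coming from \cref{thm:WO}(ii) and verify that averaging against $(\xi_a,\xi_m)$ extracts a non-zero component on the nose. In the Archimedean case, additional analytic subtleties --- continuity of the Fourier/Mellin decompositions, Casselman--Wallach topologies, and temperedness of the resulting distributions --- need to be handled carefully.
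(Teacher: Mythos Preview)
Your auxiliary-space approach via $\mathbf{Y}=\mathbf{G}\times_{\mathbf{H}}(\mathbb{A}^1\times\mathbb{G}_m)$ is genuinely different from the paper's argument, but both halves have real gaps.

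\textbf{Upper bound.} The claim that for $\mu\neq 0$ the coset $\mu\tilde\psi+\fh^\perp$ avoids $\cN$ is simply false. Take $G=\SL_2$, $H$ the diagonal torus, $\psi(\diag(t,t^{-1}))=t$; under the trace form $\tilde\psi=\tfrac12\diag(1,-1)$ and $\fh^\perp$ is the off-diagonal matrices, so $\begin{pmatrix}\mu/2 & \mu/2\\ -\mu/2 & -\mu/2\end{pmatrix}\in\mu\tilde\psi+\fh^\perp$ is nilpotent for every $\mu$. (The parenthetical ``inherits $\mu$ as an eigenvalue'' confuses $\fg^*$ with $\fg$.) Even if that claim held, it would not control the nilpotents appearing in the \emph{closure} of $G\cdot(F\tilde\zeta+F\tilde\psi+\fh^\perp)$. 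So your route through $Y$ only yields the weaker bound $\overline{G\cdot p_\fh^{-1}(Fd\zeta+Fd\psi)}\cap\cN$, and the reduction to $\overline{G\cdot Fp_\fh^{-1}(d\zeta)}\cap\cN$ is unproved. The paper avoids this entirely: it applies the Bernstein--Gelfand--Kazhdan orbitwise criterion directly to the $V\times H$ action on $G$ (with $\fv=\fg^h_{\geq 2}$), obtains $\psi\in\fg^*$ with $\psi|_\fv=\varphi|_\fv$ and $\psi|_\fh=d\zeta$, and then uses the $\fs\fl_2$-scaling $\psi_n:=t^{2n}\Ad^*(t^n)\psi\to\varphi$ to conclude.

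\textbf{Lower bound.} You correctly reduce to producing a nonzero $(N_\varphi,\eta_\varphi)\times(H,\chi)$-equivariant distribution on $G$ once the character compatibility on $N_\varphi\cap H$ is checked, but you leave the existence of that distribution open. This is precisely the content of the paper's Theorem~\ref{thm:BZ}\eqref{BZ:Ex} in the non-Archimedean case and Theorem~\ref{thm:UGH} (i.e.\ \cite[Theorem B]{GSS}) in the Archimedean case; the latter is not something one can improvise by ``a Gelfand--Kazhdan style orbitwise argument,'' and your detour through $\WO(\Sc(Y))$ does not help, since non-vanishing of $(\Sc(Y))_{N,\eta_\varphi}$ gives no control over its $(\xi_a\times\xi_m)$-isotypic quotient. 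Also, your justification that $d\chi_m$ vanishes on $\fn_\varphi\cap\fh$ should not invoke ``$\fh$ reductive'' (it is not assumed); the correct reason is that any algebraic character $\psi:H\to\mathbb{G}_m$ is trivial on unipotent subgroups, hence $\xi_m\circ\psi$ is trivial on $N_\varphi\cap H$.
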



 By Frobenius reciprocity for small induction  (see {\it e.g.} \cite[Proposition 2.29]{BZ} and \cite[Lemma 2.3.4]{GGS}), $\pi\in \Irr(G)$ is $(H,\chi)$-distinguished if and only if the contragredient representation $\widetilde{\pi}$ is a quotient of $\Sc(G)_{H,\chi\Delta_H}$, where $\Delta_H$ denotes the modular function.
\DimaD{This allows us to deduce from Theorem \ref{thm:Twisted} the following corollary.
\begin{introcor}[\S \ref{subsec:PfIrr}]\label{cor:Irr}
Let $H$ and  $\chi$ be as above, and let $\pi\in \Irr(G)_{H,\chi}$. Then
$$\WO(\pi)\subset \overline{G\cdot Fp_{\fh}^{-1}(d\zeta)}.$$
\end{introcor}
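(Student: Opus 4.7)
The plan is to deduce Corollary \ref{cor:Irr} by combining Theorem \ref{thm:Twisted} with the Frobenius reciprocity recalled in the paragraph before the corollary, and with a contragredient-symmetry for the Whittaker support of an irreducible representation.

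First I would invoke the Frobenius reciprocity quoted from \cite[Proposition 2.29]{BZ} and \cite[Lemma 2.3.4]{GGS}: the hypothesis $\pi \in \Irr(G)_{H,\chi}$ provides a surjection $\Sc(G)_{H,\chi\Delta_H} \onto \widetilde{\pi}$ in $\cM(G)$. Second, the generalized Whittaker quotient at a nilpotent $\varphi \in \cN$ is the functor of $(N_\varphi,\chi_\varphi)$-coinvariants (the formal continuous version in the Archimedean case), which is right-exact; hence any surjection $V \onto W$ in $\cM(G)$ induces a surjection of Whittaker quotients, and consequently $\WO(\widetilde{\pi}) \subset \WO(\Sc(G)_{H,\chi\Delta_H})$. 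Third, because the modular function $\Delta_H$ takes values in $F^{\times}$, the factorization \eqref{=chi} of $\chi \Delta_H$ has the same additive part $\xi_a\circ \zeta$ as that of $\chi$, only the multiplicative factor changing; so $d\zeta$ is unaffected and Theorem \ref{thm:Twisted} yields
\[
\WO(\widetilde{\pi}) \;\subset\; \WO(\Sc(G)_{H,\chi\Delta_H}) \;\subset\; \overline{G \cdot F p_{\fh}^{-1}(d\zeta)} \cap \cN.
\]

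To conclude, I would pass from $\widetilde{\pi}$ back to $\pi$. For an irreducible $\pi$ the Whittaker support satisfies $\WO(\pi) = -\WO(\widetilde{\pi})$: in the non-Archimedean case this follows from Howe's germ expansion of the distribution character together with the identity $\Theta_{\widetilde{\pi}}(g) = \Theta_\pi(g^{-1})$, and in the Archimedean case from the corresponding statement about the associated variety of the annihilator ideal, which is invariant under the principal anti-involution of $U(\fg)$. Since the set $\overline{G\cdot F p_{\fh}^{-1}(d\zeta)}$ is a union of $F$-lines, it is stable under $\varphi \mapsto -\varphi$, so the sign ambiguity is harmless, giving $\WO(\pi) \subset \overline{G\cdot F p_{\fh}^{-1}(d\zeta)}$.

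The main obstacle is the last step: cleanly justifying the equality $\WO(\pi) = -\WO(\widetilde{\pi})$ requires invoking external results (the character expansion and the MVW-style involution in the non-Archimedean setting, or associated varieties of annihilators in the Archimedean setting), whereas steps one through three are essentially formal consequences of Theorem \ref{thm:Twisted} together with Frobenius reciprocity and right-exactness of coinvariants.
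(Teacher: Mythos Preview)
Your overall architecture matches the paper's proof exactly: Frobenius reciprocity gives $\Sc(G)_{H,\chi\Delta_H}\onto\widetilde{\pi}$, right-exactness of coinvariants gives $\WO(\widetilde{\pi})\subset\WO(\Sc(G)_{H,\chi\Delta_H})$, Theorem~\ref{thm:Twisted} bounds the latter, and then one passes from $\widetilde{\pi}$ to $\pi$ via a contragredient symmetry and the $(-1)$-stability of $\overline{G\cdot Fp_{\fh}^{-1}(d\zeta)}$. Your observation that $\Delta_H$ only perturbs the multiplicative factor and leaves $d\zeta$ unchanged is exactly what makes the argument go through.

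The gap is in your justification of the contragredient step. In the non-Archimedean case, Howe's germ expansion together with $\Theta_{\widetilde{\pi}}(g)=\Theta_\pi(g^{-1})$ gives $\overline{\WF}(\widetilde{\pi})=-\overline{\WF}(\pi)$, but to translate this into a statement about $\WO$ you still need the Moeglin--Waldspurger theorem $\overline{\WF}=\overline{\WO}$; this is precisely what the paper invokes (Theorem~\ref{thm:MW}). In the Archimedean case your associated-variety argument does not work as stated: the principal anti-involution yields $\mathrm{AnV}(\widetilde{\pi})=\mathrm{AnV}(\pi)$ as complex varieties, but $\WO(\pi)\subset\fg^*(F)$ is \emph{not} determined by the annihilator variety. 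For instance, holomorphic and antiholomorphic discrete series of $\SL_2(\R)$ share the same annihilator variety (the whole nilcone) yet have distinct, sign-opposite real wave-front sets. So from $\mathrm{AnV}(\pi)=\mathrm{AnV}(\widetilde{\pi})$ you cannot deduce $\WO(\pi)=-\WO(\widetilde{\pi})$, nor even the inclusion you need. The paper closes this gap differently (Proposition~\ref{prop:-WO}): it invokes the real Chevalley involution $\theta$ of \cite{HS,Ada}, which satisfies $\pi^{\theta}\cong\widetilde{\pi}$ and $d\theta(\alpha)\in -G\cdot\alpha$, giving $\WO(\widetilde{\pi})=\WO(\pi^{\theta})=d\theta(\WO(\pi))=-\WO(\pi)$ directly at the level of $F$-rational orbits.
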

For the case $G=\GL_{n+k}(F)$ and $H=\GL_n(F)\times \GL_k(F)$, and trivial $\chi$, this statement is equivalent to  \cite[Theorem B]{SayVer}, which is proven for all local or finite fields $F$ of characteristic different from 2.
Corollary \ref{cor:Irr} is especially useful for non-Archimedean $F$, since for them the top nilpotent orbits in $\WO(\pi)$ coincide with the top nilpotent orbits in the wave-front set $\overline{\WF}(\pi)$ by \cite{MW}. In \S \ref{sec:WF} below we recall this notion and deduce from Corollary \ref{cor:Irr} the following corollary.

\begin{introcor}[\S \ref{sec:WF}]\label{cor:WF}
Let $H$ and $\chi$ be as above, and let $\pi\in \Irr(G)_{H,\chi}$. Suppose that $F$ is non-Archimedean.  Then $\overline{\WF}(\pi)\subset\overline{G\cdot Fp_{\fh}^{-1}(d\zeta)}.$

In particular, if $\chi$ is trivial on the unipotent radical of $H$ then $\overline{\WF}(\pi)\subset \overline{G\cdot\fh^{\bot}}$.
\end{introcor}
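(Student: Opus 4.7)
The plan is to combine Corollary \ref{cor:Irr} with the Mœglin--Waldspurger theorem recalled just before the statement. First, Corollary \ref{cor:Irr} gives $\WO(\pi)\subset\overline{G\cdot Fp_{\fh}^{-1}(d\zeta)}$; since the right-hand side is closed and $G$-invariant, we also get $\overline{\WO(\pi)}\subset\overline{G\cdot Fp_{\fh}^{-1}(d\zeta)}$. Next, in the non-Archimedean case the Mœglin--Waldspurger theorem asserts that the maximal nilpotent orbits in $\WO(\pi)$ coincide with those in the wave-front set $\WF(\pi)$. Since any $G$-invariant subset of the nilpotent cone has the same closure as the union of its maximal orbits, this upgrades to an equality of closures $\overline{\WF}(\pi)=\overline{\WO(\pi)}$. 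Chaining these two facts yields the first inclusion $\overline{\WF}(\pi)\subset\overline{G\cdot Fp_{\fh}^{-1}(d\zeta)}$.

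For the ``in particular'' clause, it suffices to show that the hypothesis $\chi|_{N_H}=1$, where $N_H$ is the unipotent radical of $H$, forces $d\zeta=0$; for then $p_{\fh}^{-1}(d\zeta)=\fh^{\bot}$ and hence $Fp_{\fh}^{-1}(d\zeta)=\fh^{\bot}$, reducing the first part of the corollary to the desired statement. In the decomposition \eqref{=chi}, the factor $\xi_m\circ\psi$ is automatically trivial on $N_H$, because in characteristic zero any algebraic homomorphism from a unipotent group to $\mathbb{G}_m$ vanishes. Thus $\chi|_{N_H}=1$ implies $\zeta|_{N_H}=0$, so $\zeta$ descends to the reductive quotient $H/N_H$. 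In characteristic zero a reductive group admits no non-zero algebraic map to $\mathbb{G}_a$ (since tori and derived subgroups kill such maps), and so $\zeta=0$, giving $d\zeta=0$ as required.

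The argument is essentially an assembly of Corollary \ref{cor:Irr} with a literature black box (Mœglin--Waldspurger), so I do not foresee any substantive obstacle. The two points that must be handled with a bit of care are (i) the passage from ``top orbits agree'' to ``closures agree'', and (ii) the brief character-theoretic observation that identifies $Fp_{\fh}^{-1}(d\zeta)$ with $\fh^{\bot}$ in the unipotent-radical specialization.
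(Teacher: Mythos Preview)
Your proposal is correct and follows essentially the same route as the paper: invoke Corollary \ref{cor:Irr}, then the M{\oe}glin--Waldspurger/Varma identity $\overline{\WF}(\pi)=\overline{\WO(\pi)}$ (stated in the paper as Theorem \ref{thm:MW}), and finally observe that the hypothesis on $\chi$ forces $\zeta$ to be trivial. Your write-up is in fact more detailed than the paper's in two places --- you spell out the passage from ``top orbits agree'' to ``closures agree'', and you justify carefully why $\chi|_{N_H}=1$ forces $\zeta=0$ (the paper simply asserts ``then $\zeta=1$'') --- but these are elaborations of the same argument, not a different approach.
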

}

This corollary was our starting point for this paper. It was inspired by the paper \cite{HarWeich} that shows that the wave-front set of $L^2(G/H)$ in the Archimedean case is $\overline{G\cdot\fh^{\bot}}$.

Another powerful Archimedean analogue of Corollary \ref{cor:WF} is proven in \cite{GS_R}. Namely, \DimaB{in \cite{GS_R} we show that if $H$ is a spherical subgroup of $G$\ then} the Zariski closure of $\overline{\WF}$ coincides with the Zariski closure of a nilpotent $\bfG$-orbit in $\fg^*(\C)$ that intersects $\fh^\bot$. We conjecture that the same holds in the $p$-adic case. We refer the reader to \cite[\S 10]{GS_R} for more details on this conjecture and its generalization, as well as some evidence and applications.

Next we ask the following question: given  $\varphi\in \WO(\Sc(G)_{H,\chi})$, does there exist $\pi\in \Irr(G)_{H,\chi}$ with $\varphi\in \WO(\pi)$?
We can answer this question for certain $\varphi$, and for (absolutely) spherical subgroups $H$, {\it i.e.} subgroups $H={\bf H}(F)$ with $\bf H$  acting on the flag variety of $\bf G$ with an open orbit. Let $P$ be an adapted parabolic for $G/H$ - see \S \ref{sec:Geo} below for this notion.
For any character $\chi$ of $H$, denote by $\chi'_P$ the character of $P\cap H$ given by
\begin{equation}
\chi'_{P}:=\chi|_{P\cap H}\Delta_{P}^{-1/2}\Delta_H^{-1}\,,
\end{equation}
where $\Delta$ denotes the modular functions.
\begin{introprop}[\S \ref{sec:PfCor}]\label{prop:PrinSer}
Let $P$ be an adapted parabolic of $G/H$ such that $PH$ is open  in $G$ and let $\pi=\Sc(G)_{P,\Delta_P^{-1/2}}$ be the space of smooth vectors of the normalized induction $\mathrm{Ind}_{P}^G1$. Then
\begin{enumerate}[(i)]
\item \label{PS:irr} $\pi$ is irreducible and unitarizable.
\item \label{PS:WO} $\WO(\pi)=G\cdot \fp^{\bot}$
\item \label{PS:Quot} Assume $H$ is spherical, and $\chi$ is a character of $H$. If $\chi$ is trivial on the unipotent radical of $H$ and $\chi'_{P}=1$   then $\pi\in \Irr(G)_{H,\chi}$.
\end{enumerate}
\end{introprop}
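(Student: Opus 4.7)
Part (ii) follows by applying Theorem \ref{thm:Twisted} with $H$ replaced by $P$ and with $\chi = \Delta_P^{-1/2}$. Since $\Delta_P$ is algebraic and takes values in $\RR_{>0}$, the character $\Delta_P^{-1/2}$ is of multiplicative type in the decomposition \eqref{=chi}: we may take $\zeta \equiv 0$, hence $d\zeta = 0$, and therefore $p_\fp^{-1}(d\zeta) = \fp^{\bot}$. Note $\fp^{\bot}\subset\cN$ since it is the annihilator of a parabolic subalgebra, and $G\cdot\fp^\bot$ is already closed in $\fg^*$ (it is the image of the complete variety $G/P$ under the moment map for $T^*(G/P)$, and coincides with the closure of the Richardson orbit). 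Thus the outer and inner bounds of Theorem \ref{thm:Twisted} collapse, giving $\WO(\pi) = G\cdot\fp^\bot$.

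For part (i), unitarizability is immediate since $\pi$ is the smooth part of a unitary (normalized) parabolic induction of the trivial character. Irreducibility is the delicate point, as the trivial character is a classical reducibility point of the principal series. The plan is to exploit part (ii) together with structural properties of \emph{adapted} parabolics developed in Section \ref{sec:Geo}: any proper subquotient $\pi'$ of $\pi$ would have $\WO(\pi') \subsetneq G\cdot\fp^\bot$ by the exactness properties of the generalized Whittaker functor, while on the other hand a Jacquet-module / geometric-lemma computation along $P$ (Bernstein--Zelevinsky in the $p$-adic case, Casselman--Hecht--Schmid in the Archimedean case) combined with the defining properties of an adapted parabolic forces $\pi$ to possess a unique irreducible quotient whose Whittaker support is precisely the Richardson orbit associated to $P$. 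Comparing these two facts yields irreducibility of $\pi$.

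For part (iii), combine Frobenius reciprocity for small induction with a Bruhat--Mackey analysis of $\pi|_H$. The stratification of $G$ by $(P,H)$-double cosets produces an $H$-equivariant filtration of $\pi$; since $PH$ is open in $G$, the top piece is isomorphic (up to modular twists) to
\[
\Ind^H_{P\cap H}\bigl(\Delta_P^{-1/2}|_{P\cap H}\cdot \Delta_{P\cap H}^{1/2}\bigr).
\]
Another application of Frobenius reciprocity shows that this quotient carries a nonzero $(H,\chi)$-equivariant functional precisely when $\chi|_{P\cap H} = \Delta_P^{1/2}\Delta_H$, which is exactly the hypothesis $\chi'_P = 1$. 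Sphericality of $H$ together with the triviality of $\chi$ on the unipotent radical of $H$ are then used in the orbitwise fashion of Gelfand--Kazhdan (respectively, its Archimedean analog via Schwartz sections and wave-front bounds) to show that contributions from the lower-dimensional $(P,H)$-strata cannot obstruct extending the top-cell functional to a nonzero functional on all of $\pi$. The principal obstacle throughout is the irreducibility statement in (i): this is the step in which the precise content of \emph{adapted} parabolic, as opposed to an arbitrary parabolic with $PH$ open, must be invoked decisively, since without this hypothesis the induced representation would typically decompose.
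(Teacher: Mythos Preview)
Your treatment of part (ii) is correct and matches the paper's argument exactly.

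The principal gap is in part (i). You assert that irreducibility is ``the delicate point'' and propose to deduce it from (ii) together with the adapted-parabolic hypothesis, claiming that without adaptedness ``the induced representation would typically decompose.'' This is a misconception: the irreducibility of the normalized induction $\mathrm{Ind}_P^G 1$ holds for \emph{every} parabolic subgroup $P$ of a reductive group over a local field of characteristic zero, adapted or not. It is a classical consequence of Bruhat theory (the intertwining number $\dim \Hom_G(\pi,\pi)$ is computed orbit-by-orbit over $P\backslash G/P$, and each Bruhat cell contributes at most one dimension, with only the open cell contributing when the inducing character is unitary and fixed by no nontrivial Weyl element); the paper simply cites \cite[\S 4]{KV} for the Archimedean case and remarks that the non-Archimedean case is analogous. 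Your proposed mechanism---that every proper subquotient $\pi'$ satisfies $\WO(\pi')\subsetneq G\cdot\fp^{\bot}$---is not justified: right-exactness of the Whittaker functor controls quotients, not subrepresentations, and gives no strictness statement. The adapted hypothesis plays no role in (i) or (ii); it enters only in (iii).

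For part (iii), your Mackey-theoretic outline is in the right spirit, but you miss the one structural input the paper actually uses. Since $PH$ is open, the coset $[1]\in G/H$ lies in an open $P$-orbit, so by Corollary~\ref{cor:red} (a consequence of the local structure theorem for the adapted parabolic) the stabilizer $P\cap H$ is \emph{reductive}, hence $\Delta_{P\cap H}=1$. With this in hand, the existence of the $(H,\chi)$-functional follows directly from \cite[Proposition D]{GSS}; one does not need a separate argument ruling out obstructions from lower strata, nor does one need to invoke sphericality via wave-front bounds as you suggest.
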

We note that if $H$ is unimodular then $\Delta_P=\Delta_H=1$, and $\chi'_P=\chi|_P$ (see Lemma \ref{lem:uni} below).


\begin{remark}
In case $X$ is a unimodular spherical homogeneous space and if in addition
$\cM\cap \cN\subset G\cdot\fp^{\bot}$ then
Theorem \ref{thm:WO} and Proposition \ref{prop:PrinSer}
imply the exact equality
\begin{equation}\label{=WO}
\WO(\Sc(X))=\cM\cap \cN=G\cdot\fp^{\bot}
\end{equation}

We note that if $G$ is quasi-split then  \cite{Kno} and \cite[Appendix A]{PraSak} imply the weaker inclusion $\cM\cap \cN\subset \bfG\cdot\fp^{\bot}$ - see Corollary \ref{cor:Geo} below.
\end{remark}

\begin{introcor}[\S \ref{sec:PfCor}]\label{cor:main} Let $H\subset G$ be an algebraic subgroup, and let $\chi$ be a character of $H$, trivial on its unipotent radical. Let $P$ be an adapted parabolic of $G/H$.
\begin{enumerate}[(i)]
\item \label{cormain:FinDim}If $P=G$ then every   $\pi\in \Irr(G)_{(H,\chi)}$ is finite-dimensional.
\item \label{cormain:ExFinDim} If $P\neq G$,  $PH$ is open  in $G$, $H$ is (absolutely) spherical and $\chi'_{P}=1$\\ then there exists an infinite-dimensional unitarizable  $\pi \in \Irr(G)_{(H,\chi)}$.
\item \label{cormain:ExGen}If $P$ is a Borel subgroup of $G$ (in particular $G$ is quasi-split), and $\chi'_{P}=1$\\ then  there exists a generic unitarizable  $\pi \in \Irr(G)_{(H,\chi)}$.
\item \label{GH:NoGen} If $P$ is not a Borel subgroup of $G$ then no  $\pi\in \Irr(G)_{(H,\chi)}$ is generic.
\end{enumerate}
\end{introcor}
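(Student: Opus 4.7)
The plan is to deduce all four statements by combining the upper bound on the Whittaker support of distinguished representations (Corollary \ref{cor:Irr}), the geometric inclusion $\cM\cap\cN\subset\bfG\cdot\fp^{\bot}$ (Corollary \ref{cor:Geo}), the explicit construction of Proposition \ref{prop:PrinSer}, and the standard dictionary relating $\WO(\pi)$ to finite-dimensionality and to genericity. The common preliminary observation is that the assumption ``$\chi$ trivial on the unipotent radical of $H$'' forces the additive component $\zeta$ of $\chi$ to vanish (a reductive group admits no non-trivial algebraic morphism to $\mathbb{G}_a$), so $d\zeta=0$ and $Fp_{\fh}^{-1}(d\zeta)=\fh^{\bot}$. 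Corollary \ref{cor:Irr} therefore specializes, for every $\pi\in\Irr(G)_{(H,\chi)}$, to
\[
\WO(\pi)\subset \overline{G\cdot\fh^{\bot}}\cap\cN \;=\; \cM\cap\cN.
\]

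For part (\ref{cormain:FinDim}), if $P=G$ then $\fp^{\bot}=0$; combining the displayed inclusion with Corollary \ref{cor:Geo} gives $\WO(\pi)\subset\bfG\cdot\fp^{\bot}=\{0\}$, so $\pi$ admits no non-trivial generalized Whittaker quotient. In the non-Archimedean case the theorem of Moeglin--Waldspurger \cite{MW} then forces $\overline{\WF}(\pi)=\{0\}$, which for an irreducible admissible representation is equivalent to being finite-dimensional. In the Archimedean case I would appeal to the analogous characterization of finite-dimensionality among irreducible Casselman--Wallach representations via the associated variety (or, equivalently, the wavefront cycle).

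For parts (\ref{cormain:ExFinDim}) and (\ref{cormain:ExGen}) the witness is $\pi_0:=\Sc(G)_{P,\Delta_P^{-1/2}}$ from Proposition \ref{prop:PrinSer}. Under the hypotheses of (\ref{cormain:ExFinDim}) -- and of (\ref{cormain:ExGen}), where sphericity of $H$ and openness of $BH$ are implicit in $B$ being an adapted parabolic for $G/H$ -- part (\ref{PS:Quot}) of the proposition yields $\pi_0\in\Irr(G)_{(H,\chi)}$, while parts (\ref{PS:irr}) and (\ref{PS:WO}) give irreducibility, unitarizability, and $\WO(\pi_0)=G\cdot\fp^{\bot}$. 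In (\ref{cormain:ExFinDim}), $P\neq G$ makes $\fp^{\bot}\neq 0$, so $\pi_0$ has a non-trivial generalized Whittaker quotient and is in particular infinite-dimensional. In (\ref{cormain:ExGen}), $\fp^{\bot}$ is the nilradical of the opposite Borel and meets the regular nilpotent orbit, so $\pi_0$ is generic.

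For part (\ref{GH:NoGen}), observe first that if $G$ is not quasi-split then no irreducible representation of $G$ is generic and the statement is vacuous; so assume $G$ is quasi-split, with Borel $B$. A generic $\pi\in\Irr(G)$ must have the regular nilpotent orbit in $\WO(\pi)$. Combining the preliminary observation with Corollary \ref{cor:Geo} gives $\WO(\pi)\subset\overline{\bfG\cdot\fp^{\bot}}$; but this closure equals the closure of the Richardson orbit attached to $P$, which for $P$ not a Borel is strictly smaller than the regular orbit and whose closure does not contain the regular nilpotent orbit -- a contradiction. The principal technical obstacle I foresee is securing the Archimedean analog of Moeglin--Waldspurger invoked in (\ref{cormain:FinDim}); every other step is a formal assembly of the tools already developed in the paper.
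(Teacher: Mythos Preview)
Your treatment of parts (\ref{cormain:ExFinDim}) and (\ref{GH:NoGen}) matches the paper's proof essentially verbatim. For part (\ref{cormain:FinDim}) there is a minor citation slip and a more uniform alternative worth knowing; for part (\ref{cormain:ExGen}) there is a genuine gap.

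\textbf{Part (\ref{cormain:FinDim}).} Corollary~\ref{cor:Geo} bounds $\cM\cap\cN$ by $\bfG\cdot(\fq^-)^{\bot}$, where $\bf Q$ is the adapted parabolic over $\overline{F}$; when $G$ is not quasi-split one need not have $\bf Q=\bf P$, so ``$\fp^{\bot}=0$'' does not literally follow from that corollary. The paper instead invokes Corollary~\ref{cor:PG}, which gives $\cM=\{0\}$ directly from the local structure theorem whenever $P=G$. From $\WO(\pi)=\{0\}$ the paper then concludes finite-dimensionality uniformly in both the Archimedean and non-Archimedean cases via \cite[Theorem 1.4]{GGS2}: writing $G$ as a quotient of $Z\times K\times M$ with $Z$ abelian, $K$ compact, and $M$ generated by unipotents, that theorem forces $M$ to act locally finitely, and irreducibility plus Schur and $K$-finiteness finish. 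This closes exactly the ``technical obstacle'' you flagged.

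\textbf{Part (\ref{cormain:ExGen}).} Your parenthetical assertion that ``sphericity of $H$ and openness of $BH$ are implicit in $B$ being an adapted parabolic for $G/H$'' is false. For example, take $G=\SL_3$ and $H=\{e\}$: every $B$-orbit on $X=G$ is a coset $Bx$, its set-stabilizer in $G$ is $B$ itself, so the adapted parabolic is $B$; yet $BH=B$ is not open and $H$ is certainly not spherical. Consequently you cannot appeal to Proposition~\ref{prop:PrinSer}\eqref{PS:Quot}. The paper circumvents this by using \cite[Corollary~C]{GSS} directly (rather than its packaged form in Proposition~\ref{prop:PrinSer}\eqref{PS:Quot}), which produces the $(H,\chi)$-invariant functional on $\mathrm{Ind}_B^G 1$ without any sphericity hypothesis; the only input needed is that $P\cap H$ is reductive (hence unimodular), which comes from Corollary~\ref{cor:red}.
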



\DimaB{Over Archimedean $F$, Parts \eqref{cormain:FinDim} and \eqref{GH:NoGen}
follow from \cite[Theorem 4.2]{KS}.
Over non-Archimedean $F$, }Part \eqref{GH:NoGen} was essentially shown in \cite{PraSak}. The emphasis in  \cite{PraSak} is on symmetric pairs, for which Prasad showed that some properties of distinguished representations are governed by the properties of the real reductive group given by the root system of the symmetric pair.
In support of this principle we derive the following corollary.

\begin{introcor}[\S \ref{sec:PfCor}]\label{cor:sym}
Suppose that $G$ is quasi-split and let $H\subset G$ be a symmetric subgroup.\\
Let $G^H_{\R}$ be the real reductive group corresponding to the symmetric pair $(G,H)$. Then
\begin{enumerate}[(i)]
\item \label{Sym:Gen} There exists a generic $\pi\in \Irr(G)_H$ if and only if $G^H_{\R}$ is quasi-split.
\item There exists an infinite-dimensional $\pi\in \Irr(G)_H$ if and only if $G^H_{\R}$ is not a quotient of the product of a compact group and a torus.
\end{enumerate}
\end{introcor}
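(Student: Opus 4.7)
The plan is to reduce both parts to Corollary \ref{cor:main} applied with the trivial character $\chi=1$, using standard structure theory of symmetric pairs. First, every symmetric subgroup is absolutely spherical, so the sphericity hypothesis in Corollary \ref{cor:main}(\ref{cormain:ExFinDim}) is automatic. Second, for a symmetric pair $(G,H)$ cut out by an involution $\theta$, the natural choice of adapted parabolic $P\subset G$ is a minimal $\theta$-split parabolic. For this choice, $PH$ is open in $G$ by the Iwasawa decomposition for symmetric varieties, and the twisted character $\chi'_P = \Delta_P^{-1/2}\Delta_H^{-1}|_{P\cap H}$ is trivial. The triviality of $\chi'_P$ reduces to the identity $\Delta_P^{1/2}|_{P\cap H}=\Delta_H|_{P\cap H}$, which I would verify by computing the adjoint actions of $P\cap H$ on $\fu_P$ and on $\fh$, using that $\theta$ interchanges $\fu_P$ with the opposite unipotent radical and fixes $\fh$, so that the two modular factors coming from the Iwasawa decomposition cancel.

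Third, I would invoke the ``Prasad dictionary'' comparing $P$ with the real reductive group $G^H_{\R}$: the $\theta$-split rank of $G$ coincides with the split rank of $G^H_{\R}$, and the Levi $Z_G(A_\theta)$ of $P$ corresponds to the $\theta$-anisotropic kernel of the pair, which in turn matches the anisotropic kernel of $G^H_{\R}$. Consequently, $P=G$ if and only if $G^H_{\R}$ is a quotient of a compact group and a torus, and $P$ is a Borel subgroup of $G$ if and only if $G^H_{\R}$ is quasi-split.

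With these facts in hand, both parts follow directly. For part (ii), if $G^H_{\R}$ is a quotient of a compact group and a torus then $P=G$, and Corollary \ref{cor:main}(\ref{cormain:FinDim}) forces every $\pi\in\Irr(G)_H$ to be finite-dimensional; otherwise $P\neq G$, and Corollary \ref{cor:main}(\ref{cormain:ExFinDim}) produces an infinite-dimensional unitarizable $\pi\in\Irr(G)_H$. For part (i), if $G^H_{\R}$ is quasi-split then $P$ is a Borel subgroup, and Corollary \ref{cor:main}(\ref{cormain:ExGen}) yields a generic unitarizable $\pi\in\Irr(G)_H$; if not, $P$ is not a Borel, and Corollary \ref{cor:main}(\ref{GH:NoGen}) rules out any generic $H$-distinguished representation.

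The main obstacle is the uniform verification of the two structural inputs above, namely the triviality $\chi'_P=1$ and the dictionary identifying $P$ with the data of $G^H_{\R}$, across both Archimedean and non-Archimedean $F$. In the non-Archimedean setting this is already implicit in the framework of \cite{PraSak}; in the Archimedean setting the analogous statements have to be transported from the classical theory of real symmetric spaces (via Helminck--Wang, Flensted-Jensen, and related work) into the present framework.
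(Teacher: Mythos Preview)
Your reduction to Corollary \ref{cor:main} and your use of the Prasad dictionary between the adapted parabolic $P$ and the real group $G^H_{\R}$ match the paper's argument exactly. The one place where the paper takes a shorter route is the verification that $\chi'_P=1$. You propose to compute the adjoint action of $P\cap H$ on $\fu_P$ via the action of $\theta$ and cancel modular factors; the paper instead observes that a symmetric subgroup $H$ is reductive, hence $\Delta_H=1$, and then invokes Lemma~\ref{lem:uni} (the existence of a $G$-invariant measure on $G/H$ forces $\Delta_P|_{P\cap H}=\Delta_{P\cap H}$, and $P\cap H$ is reductive by Corollary~\ref{cor:red}, so this is trivial). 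This two-line argument works uniformly over all local fields and avoids the case-by-case transport from real symmetric space theory that you flag as an obstacle.
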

Part \eqref{Sym:Gen} was proven in \cite{PraSak} for non-Archimedean local fields of arbitrary characteristic, as well as a certain analogue for finite fields.

In \cite[\S 6]{GS_R} we apply this corollary in order to answer \cite[Question 1]{PraSak} in many cases.

\DimaC{Finally, let us give an application to automorphic periods.
Suppose that $\bf G$ and $\bf H$ are defined over a number field $\mathbb{K}$, and let ${\bf G}(\mathbb{A}_\mathbb{K})$ and ${\bf H}(\mathbb{A}_\mathbb{K})$ denote their adelic points.
For any automorphic representation $\pi$ we define $\WO(\pi)\subset \fg^*(\mathbb{K})$ to be the set of all nilpotent $\varphi\in \fg^*(\mathbb{K})$ such that $\pi$ possesses a non-zero continuous $({\bf N}(\mathbb{A_{\mathbb{K}}}),\eta_{\varphi})$-equivariant functional, where the nilpotent subgroup ${\bf N}\subset \mathbf{G}$ and the character $\eta_{\varphi}$ of ${\bf N}(\mathbb{A_{\mathbb{K}}})$  are defined as is Definition \ref{def:Whit} below.
\begin{introcor}[\S\ref{sec:global}]\label{cor:global}
 Let $\pi$ be an irreducible  smooth automorphic representation of ${\bf G}(\mathbb{A}_\mathbb{K})$ that possesses a non-zero continuous ${\bf H}(\mathbb{A}_\mathbb{K})$-invariant functional. Then for every place $\nu$ of $\mathbb{K}$ we have
\begin{equation}\label{=aut}
\WO(\pi)\subset \overline{{\bf G}(\mathbb{K}_{\nu})\cdot\fh^{\bot}(\mathbb{K}_{\nu})},
\end{equation} where the closure is taken in the local topology of $\fg^*(\mathbb{K}_{\nu})$.
\end{introcor}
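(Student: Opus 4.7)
The plan is to reduce \eqref{=aut} to the local Corollary~\ref{cor:Irr} by factorizing the global representation into its local components and transferring both the ${\bf H}$-invariance and the Whittaker-type equivariance to each place. Setting $\chi=1$ we have $\zeta=0$, $d\zeta=0$, and $Fp_{\fh}^{-1}(d\zeta)=\fh^{\bot}$, so the local corollary at a place $\nu$ gives exactly the desired bound $\WO(\pi_{\nu})\subset\overline{{\bf G}(\mathbb{K}_{\nu})\cdot\fh^{\bot}(\mathbb{K}_{\nu})}$, and it suffices to show that $\varphi\in\WO(\pi)$ implies $\varphi\in\WO(\pi_{\nu})$ (as an element of $\fg^{*}(\mathbb{K}_{\nu})$) and that each $\pi_{\nu}$ is ${\bf H}(\mathbb{K}_{\nu})$-distinguished.

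First I would invoke the Flath-type factorization $\pi\cong\bigotimes'_{\nu}\pi_{\nu}$ with $\pi_{\nu}\in\Irr({\bf G}(\mathbb{K}_{\nu}))$. Correspondingly, ${\bf N}(\mathbb{A}_{\mathbb{K}})$ is the restricted direct product of the ${\bf N}(\mathbb{K}_{\nu})$, and the character $\eta_{\varphi}$ factors as $\prod_{\nu}\eta_{\varphi,\nu}$, where $\eta_{\varphi,\nu}$ is the local character attached to $\varphi$ viewed inside $\fg^{*}(\mathbb{K}_{\nu})$. Given a nonzero continuous $({\bf N}(\mathbb{A}_{\mathbb{K}}),\eta_{\varphi})$-equivariant functional $\ell$ on $\pi$, fix a place $\nu_{0}$ and choose vectors $v_{\nu}\in\pi_{\nu}$ for $\nu\neq\nu_{0}$ (spherical at almost every $\nu$) on which $\ell$ does not vanish identically. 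Then
\[
\ell_{\nu_{0}}\colon v\longmapsto \ell\Bigl(v\otimes\bigotimes_{\nu\neq\nu_{0}}v_{\nu}\Bigr)
\]
is a nonzero continuous $({\bf N}(\mathbb{K}_{\nu_{0}}),\eta_{\varphi,\nu_{0}})$-equivariant functional on $\pi_{\nu_{0}}$, so $\varphi\in\WO(\pi_{\nu_{0}})$. The same evaluation-at-partial-pure-tensor device applied to the global ${\bf H}(\mathbb{A}_{\mathbb{K}})$-invariant functional produces, at each place, a nonzero continuous ${\bf H}(\mathbb{K}_{\nu})$-invariant functional on $\pi_{\nu}$, placing $\pi_{\nu_{0}}$ in $\Irr({\bf G}(\mathbb{K}_{\nu_{0}}))_{{\bf H}(\mathbb{K}_{\nu_{0}})}$; Corollary~\ref{cor:Irr} then closes the argument.

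The main technical obstacle is at Archimedean places, where $\pi$ is a completed restricted tensor product of Casselman--Wallach representations and one must ensure that pairing a continuous functional on the completion with a vector in the complementary factor produces a continuous functional on $\pi_{\nu_{0}}$. I would handle this by first passing to the $(\mathfrak{g},K_{\infty})$-finite subspace, where the tensor product is purely algebraic and the localization is formal, thereby obtaining algebraic $({\bf N}(\mathbb{K}_{\nu_{0}}),\eta_{\varphi,\nu_{0}})$- and ${\bf H}(\mathbb{K}_{\nu_{0}})$-equivariant functionals on the underlying Harish-Chandra module of $\pi_{\nu_{0}}$, and then extending them continuously to the Casselman--Wallach globalization via the automatic-continuity properties of smooth moderate-growth representations. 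At non-Archimedean places no completion is involved and the localization is immediate.
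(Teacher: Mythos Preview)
Your approach is essentially identical to the paper's: factorize $\pi\cong\bigotimes'_{\mu}\pi_{\mu}$, freeze vectors at all places but $\nu$ to localize both the ${\bf H}(\mathbb{A}_{\mathbb{K}})$-invariant functional and the global Whittaker functional, thereby obtaining $\pi_{\nu}\in\Irr({\bf G}(\mathbb{K}_{\nu}))_{{\bf H}(\mathbb{K}_{\nu})}$ and $\WO(\pi)\subset\WO(\pi_{\nu})$, then apply Corollary~\ref{cor:Irr} with $\chi=1$.

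One comment on your last paragraph. The detour through $K_{\infty}$-finite vectors and ``automatic continuity'' is unnecessary and somewhat misapplied: automatic continuity concerns $(\fg,K)$-module maps between admissible Harish--Chandra modules, not arbitrary $H$-equivariant functionals, so it does not directly guarantee that an $H$-invariant functional on the Harish--Chandra module extends to the Casselman--Wallach globalization. The direct argument, which the paper uses implicitly and which you in fact wrote down first, already handles continuity: for fixed $(w_{\mu})_{\mu\neq\nu}$ the insertion $v\mapsto v\otimes\bigotimes_{\mu\neq\nu}w_{\mu}$ is a continuous linear map $\pi_{\nu}\to\pi$ by construction of the completed restricted tensor product, so its composition with the continuous global functional is automatically continuous on $\pi_{\nu}$. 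No passage to $K$-finite vectors is needed.
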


\DimaD{If one allows the functional to be defined only on $K$-finite vectors in $\pi$, the inclusion \eqref{=aut} will still hold for all non-archimedean places $\nu$. }

The main example of an ${\bf H}(\mathbb{A}_\mathbb{K})$-invariant functional is a period integral over ${\bf H}(\mathbb{K})\backslash {\bf H}(\mathbb{A}_\mathbb{K})$, or its invariant regularization. Such an integral is known to converge absolutely when $\bfH$ is a symmetric subgroup of $\bfG$ and $\pi$ is cuspidal \cite[Proposition 1]{AGR}. Together with the restriction on the Whittaker support of cuspidal representations given in \cite[Theorem 8.4(ii) and Corollary 8.11]{GGS2}, this gives an alternative proof for many of the results of \cite{AGR}.}


\subsection{Acknowledgements}

We thank Avraham Aizenbud, \DimaB{Itay Glazer,} and Michal Zydor for fruitful discussions.
D. G. was partially supported by ERC StG 637912 and ISF  249/17.

%

\section{Preliminaries}

\subsection{Smooth representations and generalized Whittaker quotients}\label{subsec:Whit}

\begin{notn}\label{not:rep}
If $F$ is non-Archimedean, we denote by $\Rep^{\infty}(G)$ the category of smooth representations of $G$ in complex vector spaces (see {\it e.g.} \cite{BZ}).

If $F$ is Archimedean,
we denote by $\Rep^{\infty}(G)$ the category of smooth  \Fre representations of $G$ of moderate growth, as in \cite[\S 1.4]{dCl}.
\end{notn}

For any algebraic subgroup $H\subset G$ and $\pi\in \Rep^{\infty}(G)$, denote by $\pi_H$ the space of coinvariants, i.e. quotient of $\pi$ by the intersection of kernels of all $H$-invariant functionals. Explicitly,
\begin{equation}\label{=coinv}
\pi_H=\pi/\overline{\{\pi(g)v -v\, \vert \,v\in \pi, \, g\in H\}}
\end{equation}
where the closure is needed only for Archimedean $F$. In the latter case,
for connected $H$ we have  $\pi_H=\pi/\overline{\fh_{\C}\pi}$ which in turn is equal to the quotient of $\oH_0(\fh,\pi)$ by the closure of zero, where $\fh$ denotes the Lie algebra of $H$. For any character $\chi$ of $H$, denote $\pi_{H,\chi}:=(\pi\otimes \chi)_{H}$.


\begin{defn}\label{def:Whit}
Let  $\varphi\in \fg^*$ be a nilpotent element, and let $\pi\in \Rep^{\infty}(G)$. We define the generalized Whittaker quotient $\pi_{\varphi}$ in the following way.

Choose an $\sll_2$-triple $(e,h,f)$ such that $\varphi$ is given by the Killing form pairing with $f$. Now, let $\fg^h_1$ denote the eigenspace of the adjoint action of $h$ on $\fg$ corresponding to eigenvalue 1, and $\fg^h_{\geq 2}$ denote the sum of the eigenspaces with eigenvalues 2 and higher. Consider the symplectic form $\omega_{\varphi}$ on $\fg^h_1$ given by $\omega_{\varphi}(X,Y):=\varphi([X,Y])$ and choose a Lagrangian $\fl$ for this form. Let $\fn$ be the nilpotent Lie algebra $\fl\oplus \fg^h_{\geq 2}$ and $N\subset G$ be the corresponding unipotent subgroup. Let $\eta_{\varphi}$ denote the unitary character of $N$ given by $\varphi$. Then we define $\pi_{\varphi}:=\pi_{N,\eta_{\varphi}}$.
\end{defn}
For more details about this definition, and the proof that it is independent of choices, we refer the reader to \cite[\S 2.5]{GGS2}.

\begin{defn}
Let  $\pi\in \Rep^{\infty}(G)$. Define $\WO(\pi)$ to be the set of all nilpotent elements $\varphi \in \fg^*$ satisfying $\pi_{\varphi}\neq 0$.
\end{defn}

\subsection{Invariant distributions}
Let $X$ be the manifold of $F$-points of a smooth  algebraic variety defined over $F$. Let $\Sc(X)$ denote the space of Schwartz functions on $X$. For non-Archimedean $F$, this means the space of locally-constant compactly supported functions, see \cite{BZ}. For Archimedean $X$, Schwartz functions are functions that decay rapidly together with all their derivatives, see {\it e.g.} \cite{AG} for the precise definition. Let $\Sc^*(X)$ denote the linear dual space if $F$ is non-Archimedean, and continuous linear dual space if $F$ is Archimedean. We refer to elements of $\Sc^*(X)$ as tempered distributions. For a group $H$ acting on $X$ and its character $\chi$, we denote by $\Sc^*(X)^{H,\chi}$ the space of tempered distributions $\xi$ satisfying $h\xi=\chi(h)\xi$. This space is dual to $\Sc(G)_{H,\chi}$.


\begin{thm}\label{thm:BZ}
Let an $F$-algebraic group $H$ act algebraically on $X$.
Let $\chi$ be a character of $H$ of the form \eqref{=chi}.
Suppose that the stabilizer $H_x$ of any point $x\in X$ is unipotent.
\begin{enumerate}[(i)]
\item \label{BZ:Non}If $\Sc^*(X)^{H,\chi}\neq 0$ then there exists $x\in X$ such that $\chi|_{H_x}=1$.
\item \label{BZ:Ex}If there exists $x\in X$ such that $\chi|_{H_x}=1$ and one of the following holds
\begin{enumerate}[(a)]
\item $F$ is non-Archimedean
\item \label{BZ:Arch}$H$ is solvable and $X$ is quasi-projective
\end{enumerate}
Then $\Sc^*(X)^{H,\chi}\neq 0$.
\end{enumerate}
\end{thm}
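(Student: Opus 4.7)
The plan is to apply the orbitwise method of Gelfand--Kazhdan and Bernstein--Zelevinsky, stratifying $X$ by its $H$-orbits and studying equivariant distributions on each stratum separately. Under the algebraic action hypothesis, $H$-orbits are locally closed in $X$, which allows one to reduce global statements about equivariant distributions to local questions on single orbits $Hx\cong H/H_x$.

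For part (i), I would take a non-zero $\xi\in \Sc^*(X)^{H,\chi}$ and consider its support $S$, a closed $H$-invariant subset. Choosing an orbit $Hx\subset S$ that is open in $S$ (such an orbit exists by Noetherian induction on orbit dimensions), the restriction of $\xi$ to $U:=X\setminus(S\setminus Hx)$ is a non-zero $(H,\chi)$-equivariant distribution supported on the closed orbit $Hx\subset U$. Filtering such distributions by the order of transverse derivatives and examining the graded pieces, the obstruction to equivariance on each component is a character of $H_x$ computed from $\chi|_{H_x}$, the modular functions, and the $H_x$-action on symmetric powers of the conormal bundle. Since $H_x$ is unipotent, $\Delta_{H_x}$ and $\Delta_H|_{H_x}$ are both trivial, and every algebraic representation of $H_x$ has trivial determinantal character. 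The obstruction therefore collapses to the single condition $\chi|_{H_x}=1$.

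For part (ii), given $x$ with $\chi|_{H_x}=1$, one first produces an $(H,\chi)$-equivariant measure on $Hx\cong H/H_x$, whose existence is guaranteed by the vanishing of the same modular-type obstruction. The remaining step is to extend this measure to a tempered distribution on $X$. In the non-Archimedean case (a), this is immediate from the sheaf-theoretic description of distributions in \cite{BZ}: one extends by zero outside an open $H$-invariant neighborhood of $Hx$ in which $Hx$ is closed, and then propagates the definition across orbits of smaller dimension. In the Archimedean case (b), the corresponding extension is more delicate, since Schwartz distributions require rapid-decay control near the boundary $\overline{Hx}\setminus Hx$; one invokes the Schwartz-function theory of \cite{AG}, using quasi-projectivity of $X$ to obtain a semi-algebraic compactification in which boundary behavior can be analyzed, and the solvability of $H$ (via Lie-theoretic triangularization or Borel's fixed-point theorem) to reduce extension inductively along a normal series to the case of a single one-parameter subgroup, where the extension can be carried out by hand.

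The main obstacle will be the Archimedean extension in (ii)(b): even once an equivariant measure on $Hx$ is in hand, proving that it prolongs to a tempered distribution on $X$ requires semi-algebraic growth estimates near the boundary, and this is precisely where the solvability of $H$ and the quasi-projectivity of $X$ are used essentially.
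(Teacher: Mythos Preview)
Your outline for part~(i) is correct and is precisely the content of the references the paper invokes (\cite{Ber,BZ} in the non-Archimedean case, \cite{AG_ST} in the Archimedean case): the Bruhat filtration by transverse order reduces the question to a character condition on $H_x$, and unipotency of $H_x$ kills the modular and conormal-determinant contributions, leaving only $\chi|_{H_x}=1$.

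Part~(ii)(a), however, has a real gap. You produce an $(H,\chi)$-equivariant measure on $Hx$ and extend by zero to the open set $U=X\setminus(\overline{Hx}\setminus Hx)$; so far so good. But ``propagating across orbits of smaller dimension'' to reach all of $X$ is exactly the hard step, and it is \emph{not} immediate from \cite{BZ}: an equivariant distribution on an open $H$-stable $U$ need not extend equivariantly to $X$, the obstruction being a class in $H^1\bigl(H,\Sc^*(X\setminus U)\otimes\chi^{-1}\bigr)$, and nothing in your sketch forces it to vanish. The paper proceeds differently. It first replaces $Hx$ by an orbit $\cO$ of \emph{minimal dimension} among those carrying an $(H,\chi)$-equivariant measure, so that it suffices to produce an equivariant distribution on the closed set $\overline{\cO}$; it then invokes \cite[Theorem~1.4]{HS}, which requires all stabilizers to be Zariski connected --- supplied here by unipotency --- to conclude $\Sc^*(\overline{\cO})^{H,\chi}\neq 0$. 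Your plan neither isolates the minimal orbit nor identifies the need for a result of this type.

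For (ii)(b) the paper simply cites \cite[Theorem~A]{GSS}. Your proposed route --- inducting along a normal series in the solvable $H$ and using a quasi-projective compactification to control boundary growth --- is in the right spirit and not far from how \cite{GSS} actually argues, but as written it is a description of where the difficulties lie rather than their resolution.
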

\begin{proof}
Since all algebraic characters of unipotent groups are trivial, we have $\Delta_{H}|_{H_x}=\Delta_{H_x}=1$
for any $x\in X$. Now, \eqref{BZ:Non} follows from \cite[\S 1.5]{Ber} and \cite[\S 6]{BZ} for non-Archimedean $F$, and from \cite[Theorem 2.2.15 and the proof of Corollary 2.2.16]{AG_ST} for Archimedean $F$.

For \eqref{BZ:Ex}, note that the condition $\chi|_{H_x}=1$ implies that the orbit $Hx$ of $x$ has an $H$-invariant measure. The Archimedean case under the condition \eqref{BZ:Arch} follows now from   \cite[Theorem A]{GSS}. Suppose now that $F$ is non-Archimedean, and let $\cO$ be an orbit of minimal dimension among those that have an $(H,\chi)$-equivariant measure. Since all the stabilizers are unipotent, and thus Zariski connected, \cite[Theorem 1.4]{HS} implies  that  $\Sc^*(\overline{\cO})^{H,\chi}\neq 0$. This in turn implies $\Sc^*(X)^{H,\chi}\neq 0$.
\end{proof}

To complement this theorem in the Archimedean case we will need the following one.

\begin{theorem}[{cf. \cite[Theorem B]{GSS}}]\label{thm:UGH}
Let $G$ be a real reductive group, and let  $H,N\subset G$ be  real algebraic subgroups, such that $N$ is unipotent. Let $\chi$ be a character of $H$ as in \eqref{=chi}, and let $\eta$ be a unitary character of $N$. Assume that for some $g\in G$ we have $$\chi|_{H\cap N^g}=\eta^g|_{H\cap N^g},$$
where $N^g=gNg^{-1}$ and $\eta^g$ is the character of $N^g$ given by $\eta^g(gxg^{-1})=\eta(x)$. Then
$$\Sc^*(G)^{N\times H,\eta \times \chi}\neq 0.$$
\end{theorem}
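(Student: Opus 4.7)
The strategy is to construct a nonzero element of $\Sc^*(G)^{N\times H,\eta\times\chi}$ supported on the single $(N\times H)$-orbit of the point $g^{-1}$, where $N\times H$ acts on $G$ by $(n,h)\cdot x := nxh^{-1}$.

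First, I would analyze the orbit $\cO := Ng^{-1}H$. Solving the stabilizer equation $ng^{-1}h^{-1}=g^{-1}$ yields $n=g^{-1}hg$, so $\Stab(g^{-1})=\{(g^{-1}hg,h):h\in H\cap N^g\}$; via the second projection it is identified with the unipotent group $H\cap N^g\subset N^g$. Evaluating $\eta\times\chi$ at a stabilizer element gives
$$\eta(g^{-1}hg)\cdot\chi(h)=\eta^g(h)\cdot\chi(h),$$
which (up to the sign convention for equivariance) is trivial on the stabilizer precisely by the hypothesis. Since the stabilizer is unipotent, the relevant modular functions restrict trivially to it, so pushing forward the twisted Haar measure $(\eta\times\chi)^{-1}\,dn\,dh$ on $N\times H$ along the orbit map $(n,h)\mapsto ng^{-1}h^{-1}$ yields a well-defined $(N\times H,\eta\times\chi)$-equivariant Radon measure $\mu_{\cO}$ on $\cO$.

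It remains to extend $\mu_{\cO}$ to a tempered distribution on $G$. The orbit $\cO$ is locally closed in $G$ as an algebraic orbit. In the non-Archimedean case Bernstein's localization principle furnishes the extension by zero on $G\setminus\cO$. In the Archimedean case this step is the main obstacle: since $N\times H$ need not be solvable, Theorem \ref{thm:BZ}(ii)(b) (equivalently \cite[Theorem A]{GSS}) does not apply directly. I would follow the template of \cite[Theorem B]{GSS}, extending inductively across the boundary strata of $\overline\cO\setminus\cO$ via equivariant Schwartz extension. Each such stratum is again an algebraic $(N\times H)$-orbit with unipotent stabilizers, and this persistent unipotence plays the role that solvability plays in the argument of \cite[Theorem A]{GSS}, allowing the inductive extension to be carried through and completing the construction.
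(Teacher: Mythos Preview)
The paper does not prove Theorem~\ref{thm:UGH}; it is stated with the attribution ``cf.\ \cite[Theorem~B]{GSS}'' and used as a black box in the proof of Theorem~\ref{thm:Twisted}. Your orbit analysis---the stabilizer computation, the verification that the hypothesis makes the relevant character trivial on the stabilizer, and the construction of the equivariant measure on $Ng^{-1}H$---is correct and is indeed the starting point of such arguments.

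Your final step, however, is essentially the same deferral the paper makes: you propose to carry out the Archimedean extension by ``following the template of \cite[Theorem~B]{GSS}'', which is precisely the result being asserted. The observation that every $(N\times H)$-stabilizer in $G$ is unipotent (as a subgroup of a conjugate of $N$) is correct and relevant, but the claim that this alone ``plays the role that solvability plays'' in the inductive extension is a heuristic rather than an argument; the actual proof in \cite{GSS} uses analytic continuation of equivariant distributions in a complex parameter, not a bare stratum-by-stratum extension. In short, your approach and the paper's coincide: both invoke \cite[Theorem~B]{GSS} rather than reprove it.
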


\section{Proof of Theorems \ref{thm:WO} and \ref{thm:Twisted}, and Corollaries \ref{cor:Irr} and \ref{cor:WF}}\label{sec:PfWO}

Let $\varphi\in \fg^*$ be a nilpotent element. As in Definition \ref{def:Whit}, let $(e,h,f)$ be an $\sll_2$-triple in $\fg$ such that $\varphi$ is given by the Killing form pairing with $f$.
Let $\fv:=\fg^h_{\geq 2}$, and let $\fn\supset \fv$ be as in Definition \ref{def:Whit}. Let $V:=\Exp(\fv)\subset N:=\Exp(\fn)$ be the corresponding nilpotent subgroups of $G$. Let $\eta_{\varphi}$ be the unitary character of $N$ corresponding to $\varphi$.

\begin{proof}[Proof of Theorem \ref{thm:WO}]
Suppose that $\varphi\in \WO(\Sc(X))$.
Then by definition of $\WO(\Sc(X))$, we have $\Sc^*(X)^{N,\eta_{\varphi}}\neq 0$ and thus $\Sc^*(X)^{V,\eta_{\varphi}}\neq 0$ . By Theorem \ref{thm:BZ}, this implies that there exists $x\in X$ such that $\varphi$ vanishes on the Lie algebra $\fv_x$ of the stabilizer of $x$ in $V$.
Thus, the restriction $\varphi|_\fv$ lies in the image of the moment map $\mu_V:T_x^*X\to \fv^*$ which is the dual of the map $\fv\to T_xX$ given by the differential of the action of $V$ on $X$. But $\mu_V$ equals the composition of $\mu:T_x^*X\to \fg^*$ with the restriction $\fg^*\to \fv^*$:
\begin{equation}
T_x^*X\overset{\mu}{\to}\fg^*\onto \fv^*
\end{equation}
Thus there exists $\psi\in \cM$ such that $\psi|_\fv =\varphi$. Since the kernel of the restriction to $\fv$ is $(\fg^*)^h_{\geq -1}$, we have  $\psi=\varphi+\psi'$, where $\psi'\in (\fg^*)^h_{\geq -1}$. Let $T\subset G$ be the one-dimensional torus with Lie algebra spanned by $h$, and let $t\in T$ with $|t|<1$. Then the coadjoint action $Ad^*(t)$ of $t$ is given on $(\fg^*)^h_i$ by $t^i$. Define a sequence $\psi_n\in \fg^*$ by $\psi_n:=t^{2n}\Ad^*(t^n)\psi$. Then $\psi_n$ converges to $\varphi$. Since $\cM$ is conic and $G$-invariant, $\psi_n\in \cM$. Since $\cM$ is closed, we have $\varphi \in \cM$.

Conversely, if $\varphi$ lies in the image of the moment map  then $\varphi$ vanishes on $\fg_x$ for some $x\in X$ and thus $\eta_{\varphi}$ is trivial on $N_x$. By Theorem \ref{thm:BZ} this implies $\Sc^*(X)^{N,\eta_{\varphi}}\neq 0$, which by definition means $\varphi \in \WO(\Sc(X))$.
\end{proof}
Let us now prove Theorem \ref{thm:Twisted} in a similar way.
\begin{proof}[Proof of Theorem \ref{thm:Twisted}]

Suppose that $\varphi\in \WO(\Sc(G)_{H,\chi})$.
Then by definition we have $\Sc^*(G)^{N\times H,\eta_{\varphi}\times \chi}\neq 0$ and thus $\Sc^*(G)^{V\times H,\eta_{\varphi}\times \chi}\neq 0$. By Theorem \ref{thm:BZ}, this implies that there exists $g\in G$ such that $\eta_{\varphi}\times \chi$ vanishes on the stabilizer in $V\times H$ of $g$. Replacing $\varphi$ by $g^{-1}\cdot \varphi$ we can assume that $g$ is the unit element. Then the stabilizer can be identified with $V\cap H$, and  we have
 $\varphi|_{\fv\cap \fh}=d\zeta|_{\fv\cap \fh}$. Thus there exists $\psi\in \fg^*$ such that $\psi|_{\fv}=\varphi|_{\fv}$ and $\psi|_{\fh}=d\zeta$.

Since the kernel of the restriction to $\fv$ is $(\fg^*)^h_{\geq -1}$, we have  $\psi=\varphi+\psi'$, where $\psi'\in (\fg^*)^h_{\geq -1}$. Let $T\subset G$ be the one-dimensional torus with Lie algebra spanned by $h$, and let $t\in T$ with $|t|<1$.  Define a sequence $\psi_n:=t^{2n}\Ad^*(t^n)\psi\in \fg^*$. Then $\psi_n\to\varphi$ and thus $\varphi\in\overline{G\cdot Fp_{\fh}^{-1}(d\zeta)}\cap \cN.$

Conversely, if $\varphi\in G\cdot p_{\fh}^{-1}(d\zeta)\cap \cN$ then $\chi|_{H\cap N^g}=\eta_{\varphi}^g|_{H\cap N^g}$ for some $g\in G$.
By Theorem \ref{thm:UGH} for Archimedean $F$, and Theorem \ref{thm:BZ} for non-Archimedean $F$, this implies $\Sc^*(G)^{N\times H,\eta_{\varphi}\times \chi}\neq 0$ and thus $\varphi\in \WO(\Sc(G)_{H,\chi})$.
\end{proof}
\DimaD{
\subsection{Proof of Corollary \ref{cor:Irr}}\label{subsec:PfIrr}
For the proof we will need the following proposition \DimaE{whose proof is postponed to \S\ref{subsec:-WO}.}
\begin{prop}\label{prop:-WO}
For any $\pi\in \Irr(G)$ we have $\overline{\WO(\pi)}=-\overline{\WO(\widetilde{\pi})}$.
Moreover, if $F$ is Archimedean then $\WO(\pi)=-\WO(\widetilde{\pi})$.
\end{prop}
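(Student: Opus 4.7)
The plan is to treat the Archimedean and non-Archimedean cases separately, in each case translating the duality between $\pi$ and $\widetilde{\pi}$ into the sign change $\varphi\mapsto -\varphi$ on the Whittaker data.

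For non-Archimedean $F$ I would combine two classical inputs. The theorem of M\oe glin--Waldspurger \cite{MW} identifies the set of maximal elements (with respect to the closure order on $\cN$) of $\WO(\pi)$ with that of $\overline{\WF}(\pi)$, and similarly for $\widetilde{\pi}$. On the other hand, the identity $\Theta_{\widetilde{\pi}}(g)=\Theta_{\pi}(g^{-1})$ on Harish-Chandra characters, when transported to the Lie algebra via $\exp$ and inserted into the Harish-Chandra--Howe local character expansion near the identity, shows that every nilpotent coadjoint orbit $\cO$ appearing in the expansion of $\Theta_{\pi}$ is replaced by $-\cO$ in that of $\Theta_{\widetilde{\pi}}$. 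Hence $\overline{\WF}(\widetilde{\pi})=-\overline{\WF}(\pi)$, and since $\WO$ is $G$-invariant, conic, and determined up to closure by its maximal orbits, this forces $\overline{\WO(\pi)}=-\overline{\WO(\widetilde{\pi})}$.

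For Archimedean $F$ I want the pointwise equality $\WO(\pi)=-\WO(\widetilde{\pi})$. The plan is to use the isomorphism from \cite{GGS}, which also underlies the independence statement cited after \Cref{def:Whit}, that the generalized Whittaker quotient can be computed equivalently with either the upper unipotent $N=\Exp(\fl\oplus\fg^h_{\geq 2})$ or the opposite unipotent $\bar N=\Exp(\fl'\oplus\fg^h_{\leq -2})$, in both cases with the character $\eta_\varphi$. Applied to $-\varphi$, whose natural $\sll_2$-triple is $(-e,-h,-f)$ and whose standard unipotent is $\bar N$, this yields $\widetilde{\pi}_{-\varphi}\cong \widetilde{\pi}_{N,\eta_{-\varphi}}$: the same unipotent as for $\pi_\varphi=\pi_{N,\eta_\varphi}$, but with opposite character. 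The claim thus reduces to the equivalence $\pi_{N,\eta_\varphi}\neq 0\iff \widetilde{\pi}_{N,\eta_{-\varphi}}\neq 0$. Since $\pi_\varphi$ is Hausdorff in the Casselman--Wallach category, Hahn--Banach yields, whenever $\pi_\varphi\neq 0$, a nonzero continuous $(N,\eta_\varphi)$-equivariant functional $\ell\colon\pi\to\CC$; transporting the $N$-equivariance through the pairing $\langle \pi(g)v,\xi\rangle=\langle v,\widetilde{\pi}(g^{-1})\xi\rangle$ exhibits $\ell$ as an $(N,\eta_{-\varphi})$-equivariant vector in the continuous dual of $\pi$, whose subspace of $G$-smooth vectors coincides with $\widetilde{\pi}$.

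The main obstacle is upgrading this equivariant distribution vector to a smooth one in $\widetilde{\pi}$, so as to turn the existence of $\ell$ into the nonvanishing of $\widetilde{\pi}_{N,\eta_{-\varphi}}$. For this step I would invoke the automatic smoothing of unipotent-equivariant functionals in the Casselman--Wallach framework --- convolving $\ell$ against a compactly supported smooth density along a transversal to $N$ and using the moderate growth of matrix coefficients --- or equivalently the results of Matumoto and Wallach on Whittaker functors for Casselman--Wallach representations; the smoothed vector retains $(N,\eta_{-\varphi})$-equivariance by genericity of the smoothing and remains nonzero because $\ell$ does. The converse implication follows by the symmetric argument with $\widetilde{\pi}$ in place of $\pi$, using $\widetilde{\widetilde{\pi}}\cong\pi$.
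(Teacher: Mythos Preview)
Your non-Archimedean argument matches the paper's: both pass through Moeglin--Waldspurger to identify $\overline{\WO}$ with $\overline{\WF}$, and then use $\Theta_{\widetilde{\pi}}(g)=\Theta_{\pi}(g^{-1})$ to flip the sign of the wave-front set.

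In the Archimedean case, however, your proposed route has a genuine gap at the ``smoothing'' step. Having a nonzero $(N,\eta_\varphi)$-equivariant functional $\ell$ on $\pi$ indeed exhibits an $(N,\eta_{-\varphi})$-eigenvector in the distribution dual $\pi^*$, but convolving $\ell$ by a test function does not preserve $(N,\eta_{-\varphi})$-equivariance: $\pi^*(f)\ell$ is $(N,\eta_{-\varphi})$-equivariant only if $f$ itself satisfies $L_n f=\eta_{-\varphi}(n)f$, which no nonzero compactly supported $f$ does. More fundamentally, an infinite-dimensional irreducible Casselman--Wallach representation has \emph{no} nonzero smooth vector on which a noncompact unipotent subgroup acts by a character, so the object you are trying to construct in $\widetilde{\pi}$ does not exist. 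Even setting this aside, what you actually need for $\widetilde{\pi}_{N,\eta_{-\varphi}}\neq 0$ is an equivariant \emph{functional on} $\widetilde{\pi}$, i.e.\ an element of $(\widetilde{\pi})^*$, not an equivariant vector \emph{in} $\widetilde{\pi}$; your argument produces neither. The references to Matumoto and Wallach concern asymptotics and holomorphic continuation of Jacquet integrals and do not supply the missing duality.

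The paper circumvents this entirely by a one-line structural argument: over Archimedean $F$ one invokes the Chevalley involution $\theta$ of $G$ (Adams \cite{Ada}, Han--Sun \cite{HaS}), which has the property that $\theta(g)$ is conjugate to $g^{-1}$ and $\alpha\circ d\theta\in -G\cdot\alpha$ for every $\alpha\in\fg^*$. Since $\pi^\theta\cong\widetilde{\pi}$ by \cite[Corollary~1.4]{Ada}, one gets immediately
\[
\WO(\widetilde{\pi})=\WO(\pi^\theta)=d\theta\bigl(\WO(\pi)\bigr)=-\WO(\pi),
\]
with no analytic input required.
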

\begin{proof}[Proof of Corollary \ref{cor:Irr}]
Since the Whittaker quotient is a space of coinvariants, the functor $\pi\mapsto \pi_{\varphi}$ is right-exact. If $\pi$ is $H$-distinguished then $\widetilde{\pi}$ is a quotient of $\Sc(G)_{H,\Delta_H\chi}$ and thus
$\WO(\widetilde{\pi})\subset \WO(\Sc(G)_{H,\Delta_H\chi})$.
By theorem \ref{thm:Twisted} we have  $\WO(\Sc(G)_{H,\Delta_H\chi}) \subset\overline{G\cdot Fp_{\fh}^{-1}(d\zeta)}$. By Proposition \ref{prop:-WO} we have $\overline{\WO(\widetilde{\pi})}=-\overline{\WO(\pi)}$.
Altogether we obtain
$$\WO(\pi)\subset-\overline{\WO(\widetilde{\pi})}\subset- \overline{\WO(\Sc(G)_{H,\Delta_H\chi})}\subset \overline{G\cdot Fp_{\fh}^{-1}(d\zeta)}$$
\end{proof}

\subsection{Wave-front sets and the proof of Corollary \ref{cor:WF}} \label{sec:WF}
In this subsection only we assume that $F$ is non-Archimedean. Let $\pi\in \Irr(G)$. Let $\chi_{\pi}$ be the character of $\pi$. It is a generalized function on $G$ and it defines a generalized function $\xi_{\pi}$ on a neighborhood of zero in ${\mathfrak{g}}$,
by restriction to a neighborhood of $1\in G$ and applying logarithm.
By \cite{HowGL} and \cite[p. 180]{HCWF},  $\xi_{\pi}$ is  a linear combination of Fourier transforms of $G$-invariant measures of nilpotent coadjoint orbits.
The measures  extend to $\fg^*$ by \cite{RangaRao}.
Denote by $\overline{WF}(\pi)$ the closure of the union of all the orbits that appear in the decomposition of $\xi_\pi$ with non-zero coefficients.

\begin{thm}[{\cite[Proposition I.11, Theorem I.16 and Corollary I.17]{MW}, and \cite{Var}}] \label{thm:MW}$\,$\\
For any $\pi\in \Irr(G)$ we have
$\overline{\WF}(\pi)=\overline{\WO(\pi)}$.
\end{thm}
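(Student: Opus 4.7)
The plan is to prove $\overline{\WF}(\pi) = \overline{\WO(\pi)}$ via the Harish-Chandra--Howe local character expansion
\begin{equation}
\xi_\pi \;=\; \sum_{\cO\in \cN/G} c_\cO(\pi)\,\hat{\mu}_\cO,
\end{equation}
valid on a neighborhood of $0\in \fg$. This expansion is the natural bridge between the analytic side (the wave-front set, defined in terms of the character) and the representation-theoretic side (Whittaker quotients, defined by $(N,\eta_\varphi)$-coinvariants), so the task reduces to reading off Whittaker information from the coefficients $c_\cO(\pi)$.

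For a nilpotent $\varphi\in \fg^*$ with $\sll_2$-triple $(e,h,f)$ as in Definition \ref{def:Whit}, I would compute $\dim \pi_\varphi$ by pairing $\xi_\pi$ with a test function supported near the exponential image of the Slodowy-type affine slice $\varphi + (\fg^*)^h_{<-1}$, twisted by the oscillating character $\eta_\varphi$. The Fourier transforms $\hat{\mu}_\cO$ are supported on $\overline{\cO}$, so a dimension-theoretic computation shows that only orbits $\cO$ with $\cO_\varphi\subseteq \overline{\cO}$ can contribute, and yields a matching identity
\begin{equation}
\dim \pi_\varphi \;=\; c_{\cO_\varphi}(\pi) \;+\; \sum_{\cO\supsetneq \cO_\varphi} a_{\cO,\varphi}\,c_\cO(\pi),
\end{equation}
with universal geometric constants $a_{\cO,\varphi}$ coming from integrating $\hat{\mu}_\cO$ against $\eta_\varphi$ on the slice.

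Both inclusions then follow by induction on the closure order of orbits. If $\cO_\varphi$ is a top orbit of $\overline{\WF}(\pi)$, then every $\cO\supsetneq \cO_\varphi$ has $c_\cO(\pi)=0$, the sum collapses to $\dim\pi_\varphi = c_{\cO_\varphi}(\pi)\neq 0$, so $\varphi\in \WO(\pi)$; saturating under $G$ and taking closures gives $\overline{\WF}(\pi)\subseteq \overline{\WO(\pi)}$. Conversely, if $\varphi \in \WO(\pi)$ then the matching identity forces some $c_\cO(\pi)\neq 0$ with $\varphi\in \overline{\cO}$, hence $\varphi\in \overline{\WF}(\pi)$, yielding $\overline{\WO(\pi)}\subseteq \overline{\WF}(\pi)$.

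The principal obstacle is the non-vanishing of the leading coefficient in the matching identity: one must show that the Fourier transform of the canonical measure on $\cO_\varphi$, restricted to the transverse slice and averaged against $\eta_\varphi$, gives a nonzero number (in fact a positive rational). This is a delicate local computation on the orbit $\cO_\varphi$ in the spirit of Rodier's germ formula, and constitutes the technical heart of \cite[Proposition I.11, Theorem I.16, Corollary I.17]{MW}; the refinement to the full closure equality (rather than just the top orbits) is the additional input provided by \cite{Var}.
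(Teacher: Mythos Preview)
The paper does not supply its own proof of this theorem: it is quoted as a result from the literature, with attribution to \cite{MW} and \cite{Var}. Your sketch is a faithful outline of the Moeglin--Waldspurger argument --- the local character expansion, the matching formula expressing $\dim\pi_\varphi$ as an upper-triangular combination of the coefficients $c_\cO(\pi)$ via integration over a transverse slice, and the non-vanishing of the diagonal term are precisely the content of the cited results in \cite{MW}.

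One correction concerns the role of \cite{Var}. Varma's contribution is not the passage from ``top orbits'' to the full closure equality; that passage is formal once the maximal orbits of $\WO(\pi)$ and of $\overline{\WF}(\pi)$ are known to coincide, since there are only finitely many nilpotent $G$-orbits and each of the two $G$-invariant sets equals the union of the closures of its maximal orbits. Rather, the slice computation in \cite{MW} that yields the non-vanishing of the diagonal coefficient assumes the residual characteristic of $F$ is odd, and \cite{Var} removes this hypothesis, extending the result to residual characteristic~$2$. (A minor wording issue: it is $\mu_\cO$, not $\hat{\mu}_\cO$, that is supported on $\overline{\cO}$; the vanishing of the off-support contributions comes from pairing $\mu_\cO$ against the Fourier transform of your test function, which is concentrated near $\varphi$.)
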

\begin{proof}[Proof of Corollary \ref{cor:WF}]
By Corollary \ref{cor:Irr} we have $\WO(\pi)\subset \overline{G\cdot Fp_{\fh}^{-1}(d\zeta)}$. By Theorem \ref{thm:MW} we have $\overline{\WF}(\pi)=\overline{\WO(\pi)}$.
Altogether we obtain
$\overline{\WF}(\pi)=\overline{\WO(\pi)}\subset \overline{G\cdot Fp_{\fh}^{-1}(d\zeta)}.$
If $\chi$ is trivial on the unipotent radical of $H$ then $\zeta=1$ and
$\overline{G\cdot Fp_{\fh}^{-1}(d\zeta)}=\overline{G\cdot \fh^{\bot}}.$
\end{proof}

\subsection{Proof of Proposition \ref{prop:-WO}}\label{subsec:-WO}
For non-Archimedean this follows from Theorem \ref{thm:MW}. Indeed,
in this case by Theorem \ref{thm:MW} we have $\overline{\WF}(\widetilde{\pi})=\overline{\WO(\widetilde{\pi})}$, and it is easy to see that $\overline{\WF}(\widetilde{\pi})=-\overline{\WF}(\pi)$. Thus $\overline{\WO(\widetilde{\pi})}=-\overline{\WO(\pi)}$.

If $F$ is Archimedean then by \cite{HS} (cf. \cite{Ada}) there exists a canonical involution $\theta$ of $G$, called the Chevalley involution, such that $\theta(g)$ is conjugate to $g^{-1}$ for any $g\in G$, and $\alp \circ d\theta\in -G\cdot \alp$ for any $\alp\in \fg^*$. By \cite[Corollary 1.4]{Ada}  the twisted representation $\pi^{\theta}$ is isomorphic to $\widetilde{\pi}$.
Thus
$\WO(\widetilde{\pi})=\WO(\pi^{\theta})=d\theta(\WO(\pi))=-\WO(\pi).$
\proofend
} 
\section{Preliminaries on the geometry of $X$ }\label{sec:Geo}
In this section we will identify the algebraic groups and algebraic varieties with their points over the algebraic closure $\overline{F}$ of $F$. We will view the $F$-points of the varieties as subsets invariant under the absolute Galois group $Gal(\overline{F}/F)$. As for the Lie algebra, we will use the notation $\fg$ to denote its $F$-points (as in the rest of the paper), and $\fg(\overline{F})$ will denote the points over the algebraic closure.
We start with several definitions and a theorem from \cite{KnoKro}.

\begin{defn}
An $F$-group is called \emph{elementary} if it is connected and all $F$-rational elements are semi-simple. An \emph{elementary radical} is the subgroup generated by $F$-rational unipotent elements. It is the smallest normal subgroup with elementary quotient. If $\bf H=LU$ is a Levi decomposition of an $F$-group $\bf H$ then ${\bf H}_{el}={\bf L}_{el}{\bf U}$, where ${\bf L}_{el}\subset {\bf L}$ is the product of all non-anisotropic simple factors.
\end{defn}
Note that a group over an algebraically closed field is elementary if and only if it is a torus.

\begin{defn}
Fix  a minimal $F$-parabolic subgroup ${\bf P}_0$ of $\bf G$. The $F$-adapted parabolic $\bf P$ of $\bf X$ that includes ${\bf P}_0$ is defined by
\begin{equation}
{\bf P}:=\{g\in \bfG \, \vert \, g{\bf P_0}x={\bf P_0} x \text{ for $x$ in a dense open subset of } {\bf X}\}
\end{equation}
We denote the $F$-points of $\bf P$ by $P$. We will call $P$ an \emph{adapted parabolic of $X$.}
\end{defn}
Most of the statements in \cite{KnoKro} deal with $F$-dense varieties. They are applicable to our case, since we assume $\bf X$ to be irreducible,  $X$ to be non-empty, and $F$ to be a local field. Under these assumptions, \cite[\S 1.A.2 and Proposition 2.6]{Pop} implies that $\bf X$ is $F$-dense, {\it i.e.} $X$ is Zariski dense in $\bf X$.

\begin{theorem}[{Local structure theorem, \cite[Corollary 4.6]{KnoKro}}]\label{thm:LS}
Let $\bf P=LU$ be a Levi decomposition of an adapted parabolic of $\bf X$. Then
there exists a smooth affine $\bf L$-subvariety ${\bf X}_{el}\subset {\bf X}$ such that
\begin{enumerate}[(i)]
\item ${\bf L}_{el}$ acts trivially on ${\bf X}_{el}$, all the $\bf L$-orbits on ${\bf X}_{el}$ are closed, and the categorical quotient ${\bf X}_{el}\to {\bf X}_{el}//{\bf L}$ is a locally trivial bundle in etale topology.
\item The natural morphism
\begin{equation}\label{=LST}
{\bf U}\times {\bf X}_{el}={\bf P}\times ^{\bf L} {\bf X}_{el}\to {\bf X}
\end{equation}
is an open embedding.
\end{enumerate}

\end{theorem}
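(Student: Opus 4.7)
The plan is to construct ${\bf X}_{el}$ as an ${\bf L}$-equivariant slice to the ${\bf U}$-action on a suitable ${\bf P}$-stable affine open of ${\bf X}$. First, since ${\bf X}$ is smooth, irreducible, and $F$-dense, Sumihiro's theorem applied to the reductive group ${\bf P}$ produces a ${\bf P}$-stable quasi-projective open neighborhood of the generic point, and passing to the non-vanishing locus of a ${\bf P}$-semi-invariant section of an ample line bundle yields a ${\bf P}$-stable affine open ${\bf X}'\subset{\bf X}$ defined over $F$. The categorical quotient $\pi\colon{\bf X}'\to{\bf X}'/\!/{\bf U}$ then exists as an affine ${\bf L}$-variety. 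Using the defining property of the adapted parabolic -- that ${\bf P}_0$-orbits and ${\bf P}$-orbits coincide on a dense open of ${\bf X}$ -- I would show, after further shrinking to an ${\bf L}$-stable affine open, that the ${\bf U}$-action on ${\bf X}'$ is free, so that $\pi$ becomes a principal ${\bf U}$-bundle.

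Since ${\bf U}$ is unipotent and hence a special algebraic group in Serre's sense, the bundle $\pi$ is Zariski-locally trivial. A choice of Zariski-local section, made ${\bf L}$-equivariant by exploiting that ${\bf L}$ normalizes ${\bf U}$ and acts on the quotient, defines ${\bf X}_{el}$ as a closed $F$-subvariety of ${\bf X}'$, and the resulting trivialization ${\bf U}\times{\bf X}_{el}\simeq{\bf X}'$ is exactly the open embedding (ii). Because the whole construction begins from ${\bf P}_0$, which is defined over $F$, all the objects in sight are automatically $\mathrm{Gal}(\overline{F}/F)$-equivariant and descend to $F$.

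The delicate step is triviality of the ${\bf L}_{el}$-action on ${\bf X}_{el}$. The idea is to leverage the fact that ${\bf L}_{el}$ is generated by $F$-rational unipotent elements of ${\bf L}$, together with the maximality of ${\bf P}$ among subgroups preserving generic ${\bf P}_0$-orbits: a non-trivial ${\bf L}_{el}$-orbit on ${\bf X}_{el}$ would mix distinct ${\bf P}$-orbits on ${\bf X}'$ in a manner incompatible with the bijection between generic ${\bf P}$-orbits and generic ${\bf P}_0$-orbits forced by the definition of the adapted parabolic. Once triviality is established, ${\bf L}$ acts on ${\bf X}_{el}$ through the elementary quotient ${\bf L}/{\bf L}_{el}$, and closedness of all orbits together with \et-local triviality of the map ${\bf X}_{el}\to{\bf X}_{el}/\!/{\bf L}$ follow from Matsushima's criterion and Luna's slice theorem applied to this essentially torus-like action. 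The main obstacle I anticipate is precisely this triviality of the ${\bf L}_{el}$-action: it is the single step that genuinely exploits the $F$-rational structure, whereas over an algebraically closed field the analogous statement reduces to the classical Brion--Luna--Vust local structure theorem.
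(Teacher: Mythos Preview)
The paper does not prove this theorem: it is quoted verbatim as \cite[Corollary 4.6]{KnoKro} (Knop--Kr\"otz) and used as a black box. There is therefore no proof in the paper to compare your attempt against.

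As for the sketch itself, it has the right overall shape but contains a genuine gap and a slip. The slip: ${\bf P}$ is parabolic, not reductive, so ``Sumihiro's theorem applied to the reductive group ${\bf P}$'' is at best a misstatement (Sumihiro does apply to connected linear groups acting on normal varieties, so the conclusion you want is available, but not for the reason you give). The gap: obtaining an ${\bf L}$-equivariant global section of the ${\bf U}$-bundle is not a matter of ``making a Zariski-local section ${\bf L}$-equivariant''. Zariski-local triviality of a torsor under a special group gives local sections on the base, but you offer no mechanism to patch or average these into a single ${\bf L}$-stable closed subvariety. The actual argument in \cite{KnoKro} (and already in \cite{BLV} over $\overline{F}$) bypasses this entirely: one produces a ${\bf P}$-eigenfunction $f$ on a ${\bf P}$-stable affine open, uses its differential to build a ${\bf P}$-equivariant morphism to $\fu$ (or equivalently to ${\bf P}/{\bf L}$), and takes ${\bf X}_{el}$ to be the fiber over $0$. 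This gives the product decomposition and the ${\bf L}$-equivariance in one stroke. Your diagnosis that the ${\bf L}_{el}$-triviality is the step that genuinely uses the $F$-structure is correct, but the argument you outline (``would mix distinct ${\bf P}$-orbits'') is too imprecise to stand on its own; in \cite{KnoKro} this is handled through a careful analysis of which characters of ${\bf P}$ can occur, together with the structure theory of elementary groups.
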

\DimaE{
Recall that we denote by $\cM\subset \fg^*$ the closure of the image of the moment map of $X$.

\begin{cor}\label{cor:PG}
If $P=G$ then $\cM=\{0\}$.
\end{cor}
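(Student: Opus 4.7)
The plan is to apply the local structure theorem (Theorem \ref{thm:LS}) and then combine its output with the fact that $F$-anisotropic simple factors carry no nonzero $F$-rational nilpotents. Since $P=G$ is its own Levi with trivial unipotent radical, we have $L=G$ and $U=\{e\}$ in the Levi decomposition, and Theorem \ref{thm:LS} produces an open $G$-subvariety $X_{el}\subset X$ on which $G_{el}$ acts trivially, whose $G$-orbits are closed and form the fibers of an \'etale-locally trivial bundle $X_{el}\to X_{el}//G$.

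First I would upgrade the trivial action of $G_{el}$ on $X_{el}$ to a trivial action on all of $X$. The $G_{el}$-fixed locus is Zariski closed and contains the open dense $X_{el}$, so by irreducibility of $X$ it equals $X$. Consequently $\fg_x\supset \fg_{el}$ for every $x\in X$, so the image of the moment map at $x$ is $\fg_x^{\bot}\subset \fg_{el}^{\bot}$, which already yields $\cM\subset \fg_{el}^{\bot}$.

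Next I would show $\fg_{el}^{\bot}\cap \cN=\{0\}$. Decompose $\fg=\fz\oplus \bigoplus_i \fg_i$ as center plus simple ideals; with respect to a non-degenerate invariant form, $\fg_{el}$ is precisely the sum of the $F$-isotropic simple ideals, so $\fg_{el}^{\bot}=\fz\oplus \bigoplus_{j\text{ anisotropic}} \fg_j$. A nilpotent element of $\fg^*(F)\cong \fg(F)$ has zero component in $\fz$ (the center is toral), so if it lies in $\fg_{el}^{\bot}$ it must lie in the sum of the $F$-anisotropic simple factors. But each such factor has no nontrivial $F$-rational unipotent element (lacking proper $F$-parabolics), and the exponential bijection between nilpotents and unipotents then shows that its Lie algebra contains no nonzero $F$-rational nilpotent. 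Hence $\cM\cap \cN=\{0\}$.

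The hard part is to upgrade $\cM\cap \cN=\{0\}$ to the stated equality $\cM=\{0\}$. This would require every closed $G$-orbit in $X_{el}$ to be a point, i.e.\ that the elementary quotient $G/G_{el}$ also acts trivially on $X$. Theorem \ref{thm:LS} does not deliver this in full generality, since elementary groups (nontrivial tori, or $F$-anisotropic simple groups) can act algebraically with closed nontrivial orbits: for example $G=\mathbb{G}_m$ acting on itself has $P=G$ but the moment image is all of $\fg^*$. I therefore expect that closing this gap requires either a sharpening of the hypothesis (e.g.\ $\mathbf{G}$ with trivial center and no $F$-anisotropic factors), or an extra geometric input beyond Theorem \ref{thm:LS} exploiting the constraint $P=G$ more deeply (for instance forcing the dense $P_0$-orbit to collapse); what the present argument cleanly establishes is the nilpotent-intersection statement $\cM\cap \cN=\{0\}$, which is exactly what the subsequent applications via $\WO(\Sc(X))\subset \cM\cap \cN$ actually use.
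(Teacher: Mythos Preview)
The paper states this corollary without proof, treating it as an immediate consequence of the local structure theorem (Theorem~\ref{thm:LS}). Your argument for the nilpotent intersection $\cM\cap\cN=\{0\}$ is correct and is the natural one: with $P=G$ we have $L=G$, $U=\{e\}$, so $X_{el}$ is open dense in $X$ and $G_{el}$ fixes it pointwise; passing to the Zariski closure gives $\fg_x\supset\fg_{el}$ for all $x$, hence $\cM\subset\fg_{el}^{\bot}$, and then $\fg_{el}^{\bot}\cap\cN=\{0\}$ since anisotropic simple factors and the center carry no nonzero $F$-rational nilpotents.

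Your diagnosis of the gap is also correct: the full statement $\cM=\{0\}$ is too strong, and your counterexample $G=\mathbb{G}_m$ acting on itself by translation is decisive (there $P=G$ trivially, the stabilizers are trivial, and $\cM=\fg^*\neq\{0\}$). The paper appears to have overstated the conclusion. The only place Corollary~\ref{cor:PG} is invoked is in the proof of Corollary~\ref{cor:main}\eqref{cormain:FinDim}, where it is combined with Theorem~\ref{thm:Twisted} to obtain $\WO(\widetilde{\pi})=\{0\}$; since $\WO$ lives in $\cN$ by definition, this application needs only $\cM\cap\cN=\{0\}$, which is exactly what your argument establishes. So the corollary should read $\cM\cap\cN=\{0\}$, and with that correction your proof matches what the paper intends.
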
}

It is easy to see from the definition, that any subgroup of an elementary group is reductive. Thus Theorem \ref{thm:LS} implies the following corollary.

\begin{cor}\label{cor:red}
There exists an open dense $P$-invariant subset $X^0$ of $X$ such that the stabilizer in $P$ of any point in $X^0$ is reductive.
\end{cor}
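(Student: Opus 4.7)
The plan is to use the local structure theorem to exhibit an explicit open dense $P$-invariant subset, and then reduce the stabilizer computation to the $L$-action on $X_{el}$.

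First, I would take $X^0\subset X$ to be the image of the open embedding $\mathbf{U}\times \mathbf{X}_{el}\hookrightarrow \mathbf{X}$ from \eqref{=LST}, intersected with $F$-points. Since $\mathbf{X}$ is irreducible and $X$ is non-empty (hence Zariski-dense in $\mathbf{X}$), this image is open and dense in $X$. It is $P$-invariant because the open embedding is $P$-equivariant for the natural action of $P = L\ltimes U$ on $P\times^L X_{el}\cong U\times X_{el}$.

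Next, under this identification, a direct calculation with the contracted-product structure shows that the stabilizer in $P$ of a point $[p,x]$ equals $pL_x p^{-1}$, where $L_x\subset L$ denotes the $L$-stabilizer of $x\in X_{el}$. Indeed, $q\in P$ fixes $[p,x]$ precisely when $qp = pl^{-1}$ for some $l\in L_x$, i.e.\ $q\in pL_xp^{-1}$. Since reductivity is preserved under conjugation, it suffices to show that $L_x$ is reductive for every $x\in X_{el}$.

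To see this, recall from Theorem \ref{thm:LS}(i) that $L_{el}$ acts trivially on $X_{el}$, so $L_{el}\subset L_x$ and $L_x/L_{el}$ is an $F$-subgroup of the elementary quotient $L/L_{el}$. The condition that all $F$-rational elements are semisimple passes to $F$-subgroups, so $L_x/L_{el}$ is elementary and hence reductive (any nontrivial unipotent radical would have nontrivial unipotent $F$-points, contradicting the elementary property). Moreover, $L_{el}$ is reductive, being the product of non-anisotropic simple factors of the Levi $L$. Since the unipotent radical of $L_x$ would project into the trivial unipotent radical of $L_x/L_{el}$ (hence lie in $L_{el}$) and meet the trivial unipotent radical of $L_{el}$, it must be trivial; thus $L_x$ is reductive. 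The main (mild) obstacle is the explicit stabilizer computation via the contracted-product, and a careful bookkeeping of the fact that an extension of reductive groups by a normal reductive subgroup is reductive in characteristic zero; the rest is immediate from Theorem \ref{thm:LS}.
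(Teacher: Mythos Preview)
Your proof is correct and follows essentially the same approach as the paper. The paper's proof is a single sentence---``any subgroup of an elementary group is reductive, thus Theorem~\ref{thm:LS} implies the corollary''---and your argument is precisely the unpacking of this: take $X^0$ to be the image of the open embedding \eqref{=LST}, identify the $P$-stabilizer of a point with a conjugate of $L_x$, and then use that $L_x/L_{el}$ sits inside the elementary group $L/L_{el}$ (hence is reductive) together with reductivity of $L_{el}$ to conclude that $L_x$ is reductive.
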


Theorem \ref{thm:LS} holds over any field of characteristic zero. It was first proven for algebraically closed fields in \cite{BLV}. We can therefore consider the adapted  parabolic of $\bf X$ over the algebraic closure $\overline{F}$ and compare it to $\bf P$.

\begin{proposition}[{\cite[Proposition 9.1]{KnoKro}}]\label{prop:PQ}
Let $\bf B\subset P_0$ be a Borel subgroup of $\bf G$, and let $\bf Q$ be the adapted parabolic subgroup of $\bf X$ that includes $\bf B$. Then ${\bf P}={\bf P}_0{\bf Q}$.
\end{proposition}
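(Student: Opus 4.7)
The plan is to establish both inclusions $\mathbf{P}_0\mathbf{Q} \subset \mathbf{P}$ and $\mathbf{P} \subset \mathbf{P}_0\mathbf{Q}$ separately. All three subgroups contain the Borel $\mathbf{B}$: this is given for $\mathbf{P}_0$, inherited by $\mathbf{P} \supset \mathbf{P}_0$, and built into the construction of $\mathbf{Q}$. Since $\mathbf{P}$ and $\mathbf{Q}$ are parabolic (from the preceding results in \cite{KnoKro}), all the groups in question are standard parabolics with respect to $\mathbf{B}$, so the statement becomes a comparison of such parabolics. In particular, $\mathbf{P}_0\mathbf{Q}$ is automatically a parabolic, being the product of two standard parabolics containing a common Borel.

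For the easier inclusion $\mathbf{P}_0\mathbf{Q} \subset \mathbf{P}$, the containment $\mathbf{P}_0 \subset \mathbf{P}$ is immediate from the definition of $\mathbf{P}$. For $\mathbf{Q} \subset \mathbf{P}$, I would choose a generic $x\in \mathbf{X}$ so that $\mathbf{B} x$ is the open $\mathbf{B}$-orbit inside the generic $\mathbf{P}_0$-orbit $\mathbf{P}_0 x$ (such an $x$ exists because a generic $\mathbf{P}_0$-orbit, being isomorphic to $\mathbf{P}_0/(\mathbf{P}_0\cap \mathbf{H}_x)$, decomposes into finitely many $\mathbf{B}$-orbits with a unique open one via the Bruhat-type decomposition on the $\mathbf{P}_0/\mathbf{B}$-bundle). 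Given $q \in \mathbf{Q}$, the equation $q\mathbf{B}x = \mathbf{B}x$ forces $qx \in \mathbf{B}x \subset \mathbf{P}_0 x$; writing $qx = p_0 x$, the element $p_0^{-1}q$ fixes $x$, and a short argument using the self-normalizing property of $\mathbf{P}_0$ yields $q\mathbf{P}_0 x = \mathbf{P}_0 x$ for generic $x$.

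For the reverse inclusion $\mathbf{P} \subset \mathbf{P}_0\mathbf{Q}$, which is the core of the proof, I would proceed by Galois descent. The subgroup $\mathbf{P}$ is defined over $F$ and hence stable under $\Gamma := \mathrm{Gal}(\overline{F}/F)$, while $\mathbf{Q}$ is only defined over $\overline{F}$. For $\sigma \in \Gamma$, the conjugate Borel $\sigma(\mathbf{B})$ is again contained in $\mathbf{P}_0$ (which is $F$-rational), and $\sigma(\mathbf{Q})$ is the adapted parabolic for $\sigma(\mathbf{B})$. Since any two Borel subgroups of $\mathbf{P}_0$ are conjugate by an element of $\mathbf{P}_0$, one can absorb the conjugation into the left factor and conclude $\mathbf{P}_0\sigma(\mathbf{Q}) = \mathbf{P}_0\mathbf{Q}$; thus $\mathbf{P}_0\mathbf{Q}$ is a $\Gamma$-stable, hence $F$-rational, parabolic contained in $\mathbf{P}$. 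To finish, I would pick any $g \in \mathbf{P}$ and, working over $\overline{F}$ with a generic $x$ satisfying $g\mathbf{P}_0 x = \mathbf{P}_0 x$, write $gx = p_0 x$ with $p_0 \in \mathbf{P}_0$; the corrected element $q := p_0^{-1}g$ fixes $x$, and I would verify it lies in $\mathbf{Q}$ by showing that it preserves the unique open $\mathbf{B}$-orbit $\mathbf{B}x$ inside $\mathbf{P}_0 x$ (this orbit is singled out by maximal dimension, so any automorphism of $\mathbf{P}_0 x$ fixing $x$ must permute $\mathbf{B}$-orbits while preserving dimension, and the open one has no competitor).

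The main obstacle is this last verification: translating the $F$-rational condition defining $\mathbf{P}$ (generic invariance of $\mathbf{P}_0$-orbits) into the $\overline{F}$-rational condition defining $\mathbf{Q}$ (generic invariance of $\mathbf{B}$-orbits) requires controlling the action of a stabilizer element $q \in \mathbf{H}_x$ on the set of $\mathbf{B}$-orbits in $\mathbf{P}_0 x$. The uniqueness of the open $\mathbf{B}$-orbit together with the connectedness of $\mathbf{H}_x$ on its identity component should provide this control, but making this precise is the delicate step, and it is the one place the argument genuinely uses the parabolic nature (not just dimension) of both $\mathbf{P}$ and $\mathbf{Q}$.
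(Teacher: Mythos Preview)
The paper does not give its own proof of this proposition: it is quoted verbatim from \cite[Proposition~9.1]{KnoKro} and used as a black box, so there is no in-paper argument to compare against.

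That said, your sketch has a genuine gap, and it is precisely the point you flag as ``the main obstacle.'' In the direction $\mathbf{Q}\subset\mathbf{P}$ you write $q x = p_0 x$, set $h:=p_0^{-1}q\in \mathbf{H}_x$, and then invoke ``the self-normalizing property of $\mathbf{P}_0$'' to conclude $q\mathbf{P}_0 x=\mathbf{P}_0 x$. But self-normalization of $\mathbf{P}_0$ in $\mathbf{G}$ is irrelevant here: what you need is that the stabilizer element $h$ preserves the set $\mathbf{P}_0 x$, and nothing you have written forces that. The same problem recurs in the direction $\mathbf{P}\subset\mathbf{P}_0\mathbf{Q}$: you produce $q:=p_0^{-1}g\in\mathbf{H}_x$ with $q\in\mathbf{P}$, and then assert that $q$ ``must permute $\mathbf{B}$-orbits'' inside $\mathbf{P}_0 x$. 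But $q$ has no reason to normalize $\mathbf{B}$, so left translation by $q$ need not carry $\mathbf{B}$-orbits to $\mathbf{B}$-orbits at all; the dimension argument only applies once you already know orbits go to orbits. Connectedness of $\mathbf{H}_x$ does not help either, since the issue is not a permutation of a finite set but whether the map respects the orbit decomposition in the first place.

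The proof in \cite{KnoKro} does not argue orbit-by-orbit in this fashion; it relies on the local structure theorem (the $\overline{F}$-version and the $F$-version, cf.\ Theorem~\ref{thm:LS}) to compare the slices $\mathbf{X}_{el}$ for $\mathbf{B}$ and for $\mathbf{P}_0$, which gives direct control over the Levi factors of $\mathbf{Q}$ and $\mathbf{P}$. Your Galois-descent observation that $\mathbf{P}_0\mathbf{Q}$ is $F$-rational is correct and is part of the story, but the identification $\mathbf{P}=\mathbf{P}_0\mathbf{Q}$ needs structural input beyond generic-orbit bookkeeping.
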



Let $\bf Q^-=MU^-$ be a parabolic subgroup of $\bf G$ opposite to $\bf Q$ (where $\bf M=\bf Q\cap Q^-$), let $\bf M_s$ be the stabilizer in $\bf M$ of a point in ${\bf X}_{el}$ (where ${\bf X}_{el}$ comes from Theorem \ref{thm:LS} applied over $\overline{F}$), and let $\bf S$ be the preimage of $\bf M_s$ under the projection $\bf Q^-\onto M$.
Let us now consider the moment map over the algebraic closure $\mu:T^*{\bf X}\to \fg(\overline{F})^*$. Let $\fs$ and $\fq^-$ denote the Lie algebras of $\bf S$ and $\bf Q^-$.

\begin{thm}[{\cite[Appendix A]{PraSak} using \cite[Satz 5.4]{Kno}}]\label{thm:Kno}
$$ \overline{\mu(T^*{\bf X})}={\bfG}\cdot\fs^{\bot}$$
\end{thm}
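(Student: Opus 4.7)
My plan combines the local structure theorem (Theorem \ref{thm:LS}) -- applied over the algebraic closure $\overline{F}$ to the adapted parabolic ${\bf Q} = {\bf M} \cdot {\bf U}_{\bf Q}$, yielding the affine slice ${\bf X}_{el}$ -- with a precise identification of the generic $\bfG$-stabilizer on ${\bf X}$, from which both inclusions fall out.

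For $\bfG \cdot \fs^{\bot} \subseteq \overline{\mu(T^{*}{\bf X})}$, I would fix a generic $x \in {\bf X}_{el}$. By the local structure theorem the map ${\bf U}_{\bf Q} \times {\bf X}_{el} \hookrightarrow {\bf X}$ is an open embedding, so ${\bf U}_{\bf Q}$ acts freely on its image and the ${\bf Q}$-stabilizer of $x$ reduces to its ${\bf M}$-stabilizer, which is ${\bf M}_{s}$ by definition. The decisive further input -- the heart of \cite[Satz 5.4]{Kno} -- is that the opposite unipotent radical ${\bf U}^{-}$ also stabilizes $x$, giving $\fg_{x} = \fm_{s} \oplus \fu^{-} = \fs$. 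Then $\mu(T_{x}^{*}{\bf X}) = \fg_{x}^{\bot} = \fs^{\bot}$, and $\bfG$-equivariance of $\mu$ produces all of $\bfG \cdot \fs^{\bot}$ in the image.

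For the reverse inclusion I would argue by dimension. The first step yields $\dim {\bf X} = \dim \bfG - \dim {\bf S}$, hence
\[
\dim T^{*}{\bf X} \;=\; 2\dim {\bf X} \;=\; \dim(\bfG/{\bf S}) + \dim \fs^{\bot} \;=\; \dim(\bfG \cdot \fs^{\bot}).
\]
Since every fiber $T_{x}^{*}{\bf X}$ maps into $\fg_{x}^{\bot}$, and on a dense open subset of ${\bf X}$ the stabilizer $\fg_{x}$ is $\bfG$-conjugate to $\fs$, the image of $\mu$ lies generically inside the irreducible variety $\bfG \cdot \fs^{\bot}$; the matching of dimensions then forces equality of closures. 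That $\bfG \cdot \fs^{\bot}$ is itself closed in this setting can be verified directly from the saturation map $\bfG \times^{{\bf S}} \fs^{\bot} \to \fg^{*}$ using conicity of $\fs^{\bot}$.

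The principal obstacle is the assertion $\fu^{-} \subseteq \fg_{x}$ for generic $x \in {\bf X}_{el}$; this is not formal from Theorem \ref{thm:LS} and constitutes the substantive geometric content being imported. The standard route passes through the Hamiltonian structure on $T^{*}{\bf X}$ as a $\bfG$-variety and the horospherical contraction, which identifies a Zariski-dense piece of the moment-map image with $\fs^{\bot}$; this is precisely the ingredient we defer to \cite{Kno} and \cite[Appendix A]{PraSak}.
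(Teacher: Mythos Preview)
The paper does not give a proof of this theorem at all; it is quoted from \cite[Appendix A]{PraSak} and \cite[Satz 5.4]{Kno}. Your attempt, however, contains a genuine error that is worth naming.

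The assertion $\fu^{-}\subseteq \fg_x$ for generic $x\in {\bf X}_{el}$ is \emph{false} in general, and it is not what \cite[Satz 5.4]{Kno} says. Take for instance ${\bf X}=\bfG/{\bf H}$ with ${\bf H}$ reductive and ${\bf Q}\neq \bfG$ (e.g.\ $\bfG=\mathrm{PGL}_2$, ${\bf H}$ a maximal torus). Every stabilizer $\bfG_x$ is conjugate to ${\bf H}$ and hence reductive, so it cannot contain the nontrivial unipotent group ${\bf U}^{-}$. Thus $\fg_x$ is \emph{not} $\fs$, and neither inclusion in your argument goes through: you do not get $\mu(T_x^*{\bf X})=\fs^{\bot}$, and the claim ``on a dense open subset of ${\bf X}$ the stabilizer $\fg_x$ is $\bfG$-conjugate to $\fs$'' used for the reverse inclusion is equally wrong. (What \emph{is} true, and explains the coincidence of dimensions you computed, is that $\dim {\bf H}=\dim {\bf S}$; but equality of dimensions is far from conjugacy.)

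The actual mechanism in \cite{Kno} is different. One does not locate $\fs^{\bot}$ inside a single cotangent fiber of ${\bf X}$. Rather, Knop shows that $T^*{\bf X}$ and $T^*{\bf X}_{hor}$ have the same closure of moment image, where ${\bf X}_{hor}=\bfG/{\bf S}$ is the horospherical contraction of ${\bf X}$; this comes from the invariant theory of the cotangent bundle (the algebra $\overline{F}[T^*{\bf X}]^{\bfG}$ and the flat map to $\fa^*/W_X$), not from identifying point stabilizers. For the horospherical variety ${\bf X}_{hor}$ your computation \emph{is} valid, since there the generic stabilizer really is ${\bf S}$; the nontrivial step is passing from ${\bf X}$ to ${\bf X}_{hor}$. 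Your final paragraph gestures at this, but the body of the argument treats ${\bf X}$ as if it were already horospherical, which it is not.
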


\begin{cor}\label{cor:Geo}
We have
$$\cM\cap \cN\subset \overline{\mu(T^*{\bf X})}(F)\cap \cN= ({\bf G}\cdot(\fq^-)^{\bot})(F)$$ \end{cor}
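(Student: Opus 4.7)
The plan is to separate the two assertions: the inclusion $\cM\cap\cN\subset\overline{\mu(T^*{\bf X})}(F)\cap\cN$, and the equality on the right-hand side.

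For the inclusion, the key point is purely topological. The set $\overline{\mu(T^*{\bf X})}(F)$ consists of the $F$-rational points of a Zariski-closed $\bfG$-invariant subvariety of $\fg^*$, so it is Zariski-closed in $\fg^*(F)$ and, a fortiori, closed in the local (analytic) topology in which $\cM$ is defined. Since it evidently contains $\mu(T^*X)\subseteq\mu(T^*{\bf X})(F)$, it contains the local closure $\cM$. Intersecting with $\cN$ gives the inclusion.

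The equality is reduced by Theorem~\ref{thm:Kno} to showing that $\bfG\cdot\fs^\bot\cap\cN=\bfG\cdot(\fq^-)^\bot$ as $\bfG$-invariant closed algebraic subvarieties of $\fg^*$ over $\overline{F}$; taking $F$-points then preserves the equality since both sides are $F$-defined and closed. One inclusion is direct: by construction ${\bf S}\subset{\bf Q}^-$, so $\fs\subset\fq^-$, and hence $(\fq^-)^\bot\subset\fs^\bot$; moreover, via a $\bfG$-invariant non-degenerate form on $\fg$, $(\fq^-)^\bot$ is identified with the nilradical $\fu^-$ of ${\bf Q}^-$, which is contained in $\cN$. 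Thus $\bfG\cdot(\fq^-)^\bot\subset\bfG\cdot\fs^\bot\cap\cN$.

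The reverse inclusion is the main point, and the main obstacle. Identifying $\fs^\bot$ via the Killing form with $\fu^-$ together with the orthogonal complement of $\fm_s$ inside $\fm$, any $\xi\in\fs^\bot\cap\cN$ can be written as $\xi=u+m$ with $u\in\fu^-$ and $m$ nilpotent in $\fm$ (nilpotency of $m$ follows because the Levi projection $\fq^-\onto\fm$ respects the abstract Jordan decomposition). One must then show that $\xi$ is $\bfG$-conjugate to an element of $(\fq^-)^\bot=\fu^-$. I would attack this by invoking the nilpotent-cone refinement of Knop's theorem: the set $\overline{\mu(T^*{\bf X})}\cap\cN$ coincides with the closure of the Richardson orbit of ${\bf Q}^-$, which is exactly $\bfG\cdot\fu^-$. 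This is essentially contained in \cite[Satz~5.4]{Kno} and in \cite[Appendix~A]{PraSak}. An alternative, more laborious route would apply Jacobson--Morozov to $m$ inside $\fm$ and use the associated cocharacter to contract $\xi$ so that its Levi component lies in the Richardson orbit of ${\bf Q}^-\cap{\bf M}$ inside ${\bf M}$, then combine with ${\bf M}$-conjugation and an orbit-induction argument (relating Richardson orbits in ${\bf M}$ to those in $\bfG$) to absorb the result into $\fu^-$; the eigenspace bookkeeping is the main technical cost of this approach.
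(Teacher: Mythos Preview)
Your treatment of the inclusion $\cM\cap\cN\subset\overline{\mu(T^*{\bf X})}(F)\cap\cN$ is fine and matches the paper's implicit reasoning.

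For the equality, however, you miss the one observation that makes the paper's proof a single line, and your workarounds are either circular or needlessly heavy. You correctly write $\xi=u+m$ with $u\in\fu^-$ and $m\in(\fm_s)^{\bot_\fm}$, and you correctly note that the projection $\fq^-\onto\fm$ forces $m$ to be nilpotent in $\fm$. What you overlook is that ${\bf M}/{\bf M}_s$ is \emph{elementary} over $\overline{F}$, hence a torus; equivalently $[\fm,\fm]\subset\fm_s$. Consequently $(\fm_s)^{\bot_\fm}$ lies in the center of $\fm$ and consists of semisimple elements. Thus $m$ is simultaneously nilpotent and semisimple, so $m=0$ and $\xi\in(\fq^-)^\bot$ outright --- no $\bfG$-conjugation is needed. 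This is exactly the paper's argument.

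Your first proposed route, ``invoke the nilpotent-cone refinement of Knop's theorem,'' is essentially citing the very equality you are asked to prove; the paper derives it from Theorem~\ref{thm:Kno} rather than importing it wholesale. Your second route via Jacobson--Morozov, contraction, and orbit induction is not wrong in spirit, but it is attacking a problem that has already evaporated: there is no nonzero Levi component $m$ to absorb, so the entire apparatus is superfluous.
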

\begin{proof}
Since $\bf M/M_s$ is elementary over $\overline{F}$, it is a torus. Thus the nilpotent elements of $\fs^{\bot}$ lie in $(\fq^{-})^{\bot}$. The corollary follows now from Theorem \ref{thm:Kno}.
%
\end{proof}

Note that the Killing form identifies $\bf (\fq^-)^{\bot}$ with $\fu^-$.

\section{Proof of Proposition \ref{prop:PrinSer} and Corollaries \ref{cor:main} and \ref{cor:sym} }\label{sec:PfCor}
%
%

\begin{proof}[Proof of Proposition \ref{prop:PrinSer}]
\eqref{PS:irr} is a well-known consequence of Bruhat theory, see \cite[\S 4]{KV} for the Archimedean case. The non-Archimedean case is proven  analogously.

\eqref{PS:WO} follows from Theorem \ref{thm:Twisted}. Indeed, we take $\zeta$ to be trivial and $\xi_m\circ\psi$ to be $\Delta_P^{-1/2}$. Then $p_{\fh}^{-1}(d\zeta)=\fp^{\bot}$. Since $G\cdot\fp^{\bot}$ is closed and lies in $\cN$, we obtain that  $$G\cdot p_{\fh}^{-1}(d\zeta)\cap \cN=G\cdot \fp^{\bot}=\overline{G\cdot Fp_{\fh}^{-1}(d\zeta)}\cap \cN.$$
By Theorem \ref{thm:Twisted} we obtain $G\cdot\fp^{\bot}=\WO(\Sc(G)_{P,\Delta_P^{-1/2}})=\WO(\pi)$.

For \eqref{PS:Quot} note that $P\cap H$ is the stabilizer of the coset $[1]\in G/H$, that lies in an open $P$-orbit. Thus, by Corollary \ref{cor:red}, $P\cap H$ is reductive, and thus $\Delta_{P\cap H}=1$.

\eqref{PS:Quot} follows now from \cite[Proposition D]{GSS}. More precisely, \cite[Proposition D]{GSS} is the case of trivial $\chi$, but the general case is proven in the same way.
\end{proof}


\begin{proof}[Proof of Corollary \ref{cor:main}]$\,$
\eqref{cormain:FinDim} Let $\pi\in \Irr(G)_H$. Present $G$ as a  quotient of $Z\times K\times M$ \DimaD{by a finite subgroup}, where $Z,K,M$ are reductive groups with $Z$ commutative, $K$ compact and $M$ generated by its unipotent elements. Then $\pi\in \Irr(Z\times K\times M)$, and we can assume $G=Z\times K\times M$.
Since $P=G$, Theorem \ref{thm:Twisted} and Corollary \ref{cor:PG} imply that $\WO(\widetilde{\pi})=\{0\}$. Thus \cite[Theorem 1.4]{GGS2} implies that $M$ acts locally finitely on $\pi$. Now, $Z$ acts on $\pi$ by scalars by the Schur-Dixmier lemma, and $\pi$ has a $K$-finite-vector. Since $\pi$ is irreducible, we obtain that it is finite-dimensional.\\
\eqref{cormain:ExFinDim} follows from Proposition \ref{prop:PrinSer}.\\
\eqref{cormain:ExGen} Let $\pi:=Ind_P^G1$. As explained in the proof of Proposition \ref{prop:PrinSer}(\ref{PS:irr},\ref{PS:WO}), $\pi$ is irreducible and generic. As before, $P\cap H$ is reductive and thus is unimodular. By \cite[Corollary C]{GSS}, $\pi\in \Irr(G)_{H,\chi}$. Again, \cite[Corollary C]{GSS} is the case of trivial $\chi$, but the general case is proven in the same way.\\
\eqref{GH:NoGen} If $G$ is not quasi-split then it has no generic representations. If $G$ is quasi-split then by \DimaD{Corollary \ref{cor:Irr}, Proposition \ref{prop:PQ} and Corollary \ref{cor:Geo} we have $\WO(\pi)\subset\overline{G\cdot \fh^{\bot}}\cap\cN=\cM\cap \cN\subset {\bfG\cdot\fp^{\bot}}$.
 If $P$ is not a Borel subgroup then $\bf \fp^{\bot}$ has no regular nilpotent elements, and thus neither has
$\WO(\pi)$.}
\end{proof}

\begin{lemma}\label{lem:uni}
If $PH$ is open in $G$ and $\Delta_H=1$ then  $\Delta_{P}|_{P\cap H}=1$.
\end{lemma}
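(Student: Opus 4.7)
My plan is to combine two ingredients. First, a standard multiplicativity identity for modular characters coming from the short exact sequence of adjoint representations induced by the decomposition $\fg = \fp + \fh$, which is available because $PH$ is open in $G$. Second, the fact, extracted from Corollary~\ref{cor:red}, that the stabilizer $P \cap H$ is reductive and hence unimodular.

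Concretely, the assumption that $PH$ is open in $G$ is equivalent to $\fp + \fh = \fg$, so the sum map, together with the antidiagonal embedding $x \mapsto (x, -x)$ of the kernel, yields a short exact sequence of $(P\cap H)$-modules under the adjoint action,
\begin{equation*}
0 \longrightarrow \fp \cap \fh \longrightarrow \fp \oplus \fh \longrightarrow \fg \longrightarrow 0.
\end{equation*}
Taking $|\det \Ad(h)|$ on each term and using multiplicativity of $\det$ on short exact sequences, combined with the fact that the reductive group $G$ is unimodular (so $\Delta_G = 1$), I obtain the identity
\begin{equation*}
\Delta_P(h)\,\Delta_H(h) \;=\; \Delta_{P\cap H}(h) \qquad \text{for every } h \in P \cap H.
\end{equation*}
Since $\Delta_H = 1$ by hypothesis, this reduces the lemma to showing $\Delta_{P\cap H} = 1$.

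For the latter, I would repeat the argument already used in the proof of Proposition~\ref{prop:PrinSer}(\ref{PS:Quot}): openness of $PH$ means that the coset $[1]\in X:=G/H$ has an open $P$-orbit, which must intersect, and hence be contained in, the open dense $P$-invariant locus $X^0 \subset X$ supplied by Corollary~\ref{cor:red}. That corollary then guarantees that $\Stab_P([1]) = P \cap H$ is reductive, and reductive algebraic groups over local fields are unimodular, so $\Delta_{P\cap H} = 1$. The linear-algebra step is entirely routine; the only substantive input is Corollary~\ref{cor:red} (equivalently the local structure theorem), so I do not expect any genuine obstacle.
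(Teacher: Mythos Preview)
Your argument is correct, and it reaches the same intermediate identity as the paper, namely $\Delta_P|_{P\cap H}=\Delta_{P\cap H}$, followed by $\Delta_{P\cap H}=1$ via reductivity of $P\cap H$ from Corollary~\ref{cor:red}. The only difference is in how the identity is obtained: the paper argues measure-theoretically (the hypothesis $\Delta_H=1$ gives a $G$-invariant measure on $G/H$, whose restriction to the open $P$-orbit $PH/H\cong P/(P\cap H)$ is $P$-invariant, forcing $\Delta_P|_{P\cap H}=\Delta_{P\cap H}$), whereas you give the infinitesimal version via the short exact sequence $0\to\fp\cap\fh\to\fp\oplus\fh\to\fg\to0$ and multiplicativity of $|\det\Ad|$. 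Your route has the small advantage of making the use of $\Delta_G=1$ explicit and isolating the purely algebraic content; the paper's route is slightly shorter and avoids writing down the exact sequence. Both rely on Corollary~\ref{cor:red} for the final step, which the paper leaves implicit.
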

\begin{proof}
Since $\Delta_H=1$, there exists a $G$-invariant measure on $G/H$. The restriction of this measure to $PH/H\cong P/(P\cap H)$ is $P$-invariant. Thus $\Delta_{P}|_{P\cap H}=\Delta_{P\cap H}$. But $\Delta_{P\cap H}=1$.
\end{proof}

\begin{proof}[Proof of Corollary \ref{cor:sym}]
Let $\theta$ be the involution of $G$ such that $H=G^{\theta}$.
Let $P$ be an adapted parabolic for $G/H$ such that $PH$ is open in $G$.
Since $H$ is reductive, $\Delta_H=1$. By Lemma \ref{lem:uni}, $\Delta_{P}|_{P\cap H}=1$. Thus, by Corollary \ref{cor:main}, the existence of generic $\pi\in \Irr(G)_H$ is equivalent to $P$ being a Borel, and the non-existence of infinite-dimensional $\pi\in \Irr(G)_H$ is equivalent to $P$ being $G$.
Now, $P=G$ if and only if $G/H$ is a torus, which in turn is equivalent to $G^H_{\R}$ being a quotient of the product of a compact group and a torus. This proves (ii). To prove (i),  we use the fact that $P$ is a Borel if and only if there exists a $\theta$-split Borel. The existence of a $\theta$-split Borel is equivalent to $G^H_{\R}$ being quasi-split by \cite{PraSak}.
\end{proof}
\DimaC{
\section{Proof of Corollary \ref{cor:global}}\label{sec:global}
Let $\pi\cong\otimes'_{\mu}\pi_{\mu}$ be the decomposition of $\pi$ into a restricted tensor product of irreducible local factors. Let $\eta$ be a non-zero $\bfH(\mathbb{A})$-equivariant functional on $\pi$. Then $\eta$ does not vanish on some pure tensor $\otimes w_{\mu}\in \pi\cong\otimes'_{\mu}\pi_{\mu}$. For any $\nu$, we
let $G:=\bfG(\mathbb{K}_{\nu})$ and $H:=\bfH(\mathbb{K}_{\nu})$ and
construct a non-zero $H$-invariant functional $\zeta_{\nu}$ on $\pi_{\nu}$ by $\zeta_{\nu}(w):=\eta(w\otimes \bigotimes_{\mu\neq \nu}w_{\mu})$.
By a similar argument, we have $\WO(\pi)\subset \WO(\pi_{\nu})$.} \DimaD{The existence of $\zeta_{\nu}$ implies by Corollary \ref{cor:Irr} that $\WO(\pi_{\nu})\subset \overline{G\cdot \fh^{\bot}(\mathbb{K}_{\nu})}$. Altogether we have $\WO(\pi)\subset \WO(\pi_{\nu})\subset \overline{G\cdot \fh^{\bot}(\mathbb{K}_{\nu})}$.
 \proofend
}

%
%

\end{document}